
\documentclass[journal,a4paper]{IEEEtran}

\usepackage{amsmath}
\interdisplaylinepenalty=2500 
\usepackage{amssymb} 
\usepackage{amsthm}
\usepackage{tabularx}
\usepackage{dsfont}
\usepackage[mathscr]{eucal}
\usepackage{algorithm}
\usepackage{algpseudocode}
\usepackage{cite}

\makeatletter
\let\NAT@parse\undefined
\makeatother
\usepackage{hyperref}

\usepackage{enumerate}



\usepackage{graphicx}      
\graphicspath{{./figs/}}

\usepackage[dvipsnames]{xcolor} %

\newcommand\traspo{{\mathpalette\raiseT\intercal}}
\newcommand\raiseT[2]{\raisebox{0.25ex}{$#1#2$}}
\newcommand{\trasp}{\ensuremath{^\traspo}}

\DeclareMathOperator*{\argmax}{arg\,max}
\DeclareMathOperator*{\argmin}{arg\,min}

\newcommand{\setN}{\ensuremath{\mathcal{N}}}

\newcommand{\setX}{\ensuremath{\mathcal{X}}}

\newcommand{\setC}{\ensuremath{\mathcal{C}}}

\newcommand{\setD}{\ensuremath{\mathcal{D}}}

\newcommand{\setY}{\ensuremath{\mathcal{Y}}}

\newcommand{\xcirc}{\ensuremath{x^{\circ}}}
\newcommand{\xopt}{\ensuremath{x^{\ast}}}
\newcommand{\yopt}{\ensuremath{y^{\ast}}}

\newcommand{\zopt}{\ensuremath{z^{\ast}}}
\newcommand{\gammaopt}{\ensuremath{\gamma^{\ast}}}

\newcommand{\xmenoi}{\ensuremath{x_{-i}}}

\newcommand{\game}{\ensuremath{\mathcal{G}}}

\newcommand{\gameaug}{\ensuremath{\widehat{\mathcal{G}}}}

\newtheorem{thm}{Theorem}
\newtheorem{cor}[thm]{Corollary}
\newtheorem{lem}[thm]{Lemma}
\newtheorem{prop}[thm]{Proposition}
\newtheorem{defn}[thm]{Definition}
\newtheorem{assum}[thm]{Assumption}

\newtheorem{rem}[thm]{Remark}

\begin{document}
\title{Probably Approximately Correct\\Nash Equilibrium Learning}
\author{Filiberto~Fele~and~Kostas~Margellos 
	\thanks{Research was supported by the UK Engineering and Physical Sciences Research Council (EPSRC) under grant agreement EP/P03277X/1.
	}
	\thanks{The authors are with the Department of Engineering Science,
		University of Oxford, OX1 3PJ, UK
		{\tt\small \{filiberto.fele, kostas.margellos\}@eng.ox.ac.uk}}%
}

\maketitle

\begin{abstract}
We consider a multi-agent noncooperative game with agents' objective functions being affected by uncertainty. Following a data driven paradigm, we represent uncertainty by means of scenarios and seek a robust Nash equilibrium solution. 
We treat the Nash equilibrium computation problem within the realm of probably approximately correct (PAC) learning.
Building upon recent developments in scenario-based optimization, we accompany the computed Nash equilibrium with \emph{a priori} and \emph{a posteriori} probabilistic robustness certificates, providing confidence that the computed equilibrium remains unaffected (in probabilistic terms) when a new uncertainty realization is encountered. 
For a wide class of games, we also show that the computation of the so called compression set --- a key concept in scenario-based optimization --- can be directly obtained as a byproduct of the proposed solution methodology.
Finally, we illustrate how to overcome differentiability issues, arising due to the introduction of scenarios, and compute a Nash equilibrium solution in a decentralized manner. 
We demonstrate the efficacy of the proposed approach on an electric vehicle charging control problem.
\end{abstract}

\begin{IEEEkeywords}
Nash equilibria, Robust game theory, Scenario approach, Variational inequalities, Electric vehicles.
\end{IEEEkeywords}

\section{Introduction}
\label{sec:introduction}
\IEEEPARstart{G}{ame} theory has attracted significant attention in the control systems community~\cite{LAMNABHILAGARRIGUEEtAl2017CPS}, 
and has found numerous applications ranging from smart grid \cite{Tony2019,DePaolaEtAl2018CDC,AtzeniEtAl2014TSP} and electricity markets \cite{Oren2004,Oren2008}, to communication networks \cite{ScutariEtAl2014} and regulatory compliance \cite{Krawczyk2007,Ohlin2012}.
The concept of Nash equilibrium (NE) is central in this context, as it defines no-regret strategies for noncooperative, selfish agents~~\cite{Nash1951,BasarOlsder1998BOOK}.
As a result, NE has been a popular solution for multi-agent distributed and decentralized control architectures, investigating also the connections with social welfare; see, e.g., \cite{FacchineiKanzow2007,Grammatico2017TAC, DeoriEtAl2018AUT, PaccagnanEtAl2018TAC, PaccagnanEtAl2018CSL,FeleEtAl2018ARC} and references therein. 

Uncertainty has been widely addressed in Nash games, by adopting stochastic or worst-case approaches. In the first case, both chance-constrained (risk-averse) \cite{Couchman20051283,SINGH2016640,PangEtAl2017} or expected payoff criteria~\cite{Ravat20111168,KoshalEtAl2013TAC,XuZhang2013,YuEtAl2017Learning,LeiShanbhag2017CDC} have been considered. For tractability, these methods typically involve assumptions on the underlying probability distribution of uncertainty realization. In the second case, results build upon robust control theory~\cite{AghassiBertsimas2006,Nishimura09robustnash,ZazoEtAl2017}; however, these rely on certain assumptions on the geometry of the uncertainty set; see~\cite{HuFukushima2013,PangEtAl2017}. 

In this paper we consider a multi-agent NE seeking problem with uncertainty affecting agents' objective functions. We depart from existing paradigms and follow a data driven methodology, where we represent uncertainty by a finite set of scenarios that could either be extracted from historical data, or by means of some prediction model (e.g., regression, Markov chains, neural networks) \cite{ConejoEtAl2010BOOK}. Adopting a data driven methodology poses a main challenge: 
NE are inherently random as they depend on the observed scenarios. Therefore, our objective is to investigate the sensitivity of the resulting NE to the uncertainty, in a probabilistic sense.
More specifically, our contributions can be outlined as follows:

1) We treat the NE computation problem in a probably approximately correct (PAC) learning framework \cite{FloydWarmuth1995,AlamoEtAl2009TAC,MargellosEtAl2015TAC}, and employ the so called scenario approach \cite{CalafioreCampi2006TAC}. Building on \cite{CampiEtAl2018TAC} we first provide an \emph{a posteriori} certificate on the probability that a NE remains unaltered upon a new realization of the uncertainty.
We then rely on \cite{CampiGaratti2008SIAM} and provide an \emph{a priori} probabilistic certificate on the equilibrium sensitivity, under an additional non-degeneracy assumption (see Section \ref{sec_thmresult} for a definition). The obtained results are distribution-free, and as such the underlying probability distribution of the uncertainty could be unknown and the only requirement is the availability of samples. 

Blending the scenario approach with game theory has only recently appeared in the literature \cite{FeleMargellos2019,PaccagnanCampi2019}. The validity of the probabilistic statements presented in this paper extends to games admitting multiple NE. Moreover, all our \emph{a posteriori} statements allow for degenerate problem instances; the latter circumvents the need of verifying the non-degeneracy assumption which, unlike convex optimization programs, is often not satisfied in games.

2) Under the additional assumption that the game under consideration admits a unique NE, or for aggregative games with multiple equilibria but a unique aggregate solution, we show that a \emph{compression set} (a key concept in learning and generalization --- see Section~ \ref{sec_thmresult} for a definition) can be directly computed by inspection of the solution returned by the proposed algorithm. This feature has significant computational advantages as it prevents the use of a greedy mechanism (see, e.g., \cite{CampiEtAl2018TAC}), which would require running up to numerical convergence multiple times (possibly as many as the number of samples) a NE seeking algorithm (see Section~\ref{sec_apost_eval}).

3) We provide a constructive proof of the existence of a single-valued mapping from the set of observed scenarios to a NE of the robust game, where the latter possibly admits multiple equilibria (and multiple maximisers). More specifically, we build an iterative algorithm for decentralized NE computation.
To circumvent nondifferentiability issues and incorporate an equilibrium selection mechanism we bridge the results in~\cite{FacchineiEtAl2014} and \cite{ScutariEtAl2014} that involve resorting to an augmented $\min-\max$ game. The proposed scheme enjoys the same convergence properties as state-of-the-art decentralized algorithms for monotone games \cite{ScutariEtAl2014} (see Section~\ref{sec_extgame}).

Note that the results presented in this paper do not contemplate constraints coupling agents' strategies. The latter give rise to \emph{generalized} NE problems; we refer the reader to \cite{FacchineiKanzow2007,PaccagnanEtAl2018TAC,Noor1996,Kannan_etal_2013} for details.


In Section~\ref{sec_thmresult} we introduce the scenario-based Nash game, pose the main problem, and present the main results of the paper. Section~\ref{sec_extgame} provides a decentralized construction of a solution algorithm for the game under study, while Section~\ref{sec_thmproof} contains the proof of the main results.
In Section~\ref{sec_apost_eval} we provide for a wide class of games a computationally efficient methodology to determine an upper bound to the cardinality of the compression set. Section~\ref{sec_example} provides an electric-vehicle charging control case study, while Section~ \ref{sec_conclusion} concludes the paper and provides some directions for future work.

\section{Scenario based multi-agent game}
\label{sec_thmresult}

\subsection{Gaming set-up}
\label{sec_setup}

Let the set $\setN = \{1,\ldots,N\}$ designate a finite population of agents.
The decision vector, henceforth referred to as strategy, of each agent $i\in\setN$ is denoted by $x_i\in\mathbb{R}^{n}$ and satisfies individual constraints encoded by the set $\setX_i\subset\mathbb{R}^n$. 
We denote by $x = (x_i)_{i\in\setN}\in\setX\subset\mathbb{R}^{nN}$ the collection of all agents' strategies, where $\setX = \setX_1 \times \cdots \times \setX_{N}$. For any agent $i\in\setN$, $\xmenoi = (x_j)_{j\in\setN\setminus\{i\}}$ denotes the collection of strategies of all other agents.

Let $\theta$ be an uncertain vector taking values over some set $\Theta$, endowed with a $\sigma$-algebra $\mathscr{Q}$, and let $\mathds{P}$ denote the probability measure defined over $\mathscr{Q}$. 
For all subsequent derivations fix any $M \in \mathbb{N}$, and let $(\theta_1,\ldots,\theta_M) \in \Theta^M$ be a finite collection of independent and identically distributed (i.i.d.) scenarios/realizations of the uncertain vector $\theta$, that we will henceforth designate as $M$-multisample.
For given strategies of the remaining agents $\xmenoi$, each agent $i\in\setN$ aims at minimizing with respect to $x_i$ the function
\begin{equation}\label{eq_obj}
J_i(x_i,\xmenoi) = f_i(x_i,\xmenoi) + \max\limits_{m \in\{1,\ldots,M\}} g(x_i,\xmenoi,\theta_m),
\end{equation}
where $f_i: \mathbb{R}^{nN} \to \mathbb{R}$ expresses a deterministic objective, different for each agent $i$ but still dependent on the strategies of all agents, while $g: \mathbb{R}^{nN} \times \Theta \to \mathbb{R}$ encodes a common component in the agents' objective function that depends on the uncertain vector.
Agents are interested in minimizing their local objective $f_i$ and the worst-case (maximum) value $g$ can take among a finite set of scenarios.
The electric vehicle charging control problem of Section~\ref{sec_example} provides a natural interpretation of such a set-up, where electric vehicles are selfish entities each one with a possibly different utility function $f_i$; however, they could be participating in the same aggregation plan or belonging to a centrally managed fleet, thus giving rise to a common $g$.

We consider a noncooperative game among the $N$ agents, described by the tuple $\game = \langle \setN,(\setX_i)_{i\in\setN}, (J_i)_{i\in\setN},\{ \theta_m \}_{m=1}^M \rangle$, where $\setN$ is the set of agents/players, $\setX_i$, $J_i$ are respectively the strategy set and the cost function for each agent $i\in\setN$, and $\{\theta_1, \ldots,\theta_M\}$ is a finite collection of samples.
We consider the following solution concept for $\game$:
\begin{defn}[Nash equilibrium]\label{defn_NE}
	Let $\Omega \subseteq \setX$ denote the set of Nash equilibria of $\game$, defined as
	\begin{multline}\label{eq_defNE}
	\Omega = \big\{ \xopt = (\xopt_i)_{i\in\setN} \in \setX \colon  \\
 \xopt_i \in \argmin_{x_i\in\setX_i} J_i(x_i,\xopt_{-i}), \, \forall i\in\setN\big\}.
	\end{multline}
\end{defn}

We impose the following standing assumptions:
\begin{assum}\label{assum_eqNE_VIsol} 
	(i) For any $\theta\in\Theta$, and any $x_{-i} \in \setX_{-i} = \Pi_{j \neq i \in \setN} \setX_j$, $f_i(\cdot,x_{-i}) + g(\cdot,x_{-i},\theta)$ is convex and continuous differentiable, while the local constraint set $\setX_i$ is nonempty, compact and convex for all $i\in\setN$. \\
	(ii) For any $\theta\in\Theta$, and for all $i\in\setN$, the functions $g$ and $f_i$ are twice differentiable on an open convex set containing $\setX$. \\
	(iii) The pseudo-gradient $(\nabla_{x_i} f_i(x))_{i\in\setN}$ is monotone with constant $\chi^f \in \mathbb{R}$, while $\nabla_{x} g(x,\theta)$ is monotone with constant $\chi^g \in \mathbb{R}$ for any fixed $\theta$, i.e., for any $u, v \in \mathbb{R}^{nN}$, and $\theta \in \Theta$,
	\begin{align}\label{eq_assum2}
	& (u-v)\trasp ((\nabla_{u_i} f_i(u))_{i\in\setN} - (\nabla_{v_i} f_i(v))_{i\in\setN}) \geq \chi^f \|u-v\|^2, \nonumber \\
	& (u-v)\trasp (\nabla_{u} g(u,\theta) - \nabla_{v} g(v,\theta))  \geq \chi^g \|u-v\|^2,
	\end{align}
	and $\chi^f + \chi^g \geq 0$. 
\end{assum}

We wish to emphasize that convexity of $f_i(\cdot,x_{-i}) + g(\cdot,x_{-i},\theta)$ (for any fixed $x_{-i}$ and $\theta$) does not require $f_i(\cdot,x_{-i})$ and $g(\cdot,x_{-i},\theta)$ to be both convex; indeed, Assumption \ref{assum_eqNE_VIsol} allows either function to be \emph{weakly} convex.
Similarly, it is not required that both $\chi^f, \chi^g$ are non-negative, but only $\chi^f + \chi^g \geq 0$. 
Note that a sufficient (but stronger) condition for the monotonicity requirements to be satisfied is for
$f_i + g$ to be jointly convex with respect to $x$. 

\subsection{Problem statement}
\label{sec_probstate}

As every NE $\xopt\in\Omega$ is a random vector due to its dependency on the $M$-multisample, a question that naturally arises is how sensitive a NE is against a new realization of the uncertainty.
More formally, let $\xopt\in\Omega$ be a NE of the game with $M$ samples. Consider a new realization $\theta \in \Theta$, and
let $\game^+ = \langle \setN,(\setX_i)_{i\in\setN}, (J_i)_{i\in\setN},\{ \theta_m \}_{m=1}^M \cup \{\theta\} \rangle$ be the game defined over the $M+1$ scenarios $\{\theta_1, \ldots,\theta_M,\theta\}$; denote by $\Omega^+$ the set of the associated NE. 
Then, for all $\xopt\in\Omega$, let
\begin{equation}
	V(\xopt) = \mathds{P}\{\theta\in\Theta\colon\, \xopt\notin\Omega^+ \}
\end{equation}
denote the probability that a NE of $\game$ does not ``remain'' a NE of $\game^+$, i.e., of the game characterized by the extraction of an additional sample. Note that $V(\xopt)$ is in turn a random variable, as its argument depends on the multisample $\{\theta_1, \ldots,\theta_M\}$.
To provide a rigorous answer to the above question we will study the generalization properties of $\xopt$ within a probably approximately correct (PAC) learning framework.
With a given confidence/probability with respect to the product measure $\mathds{P}^{M}$ (as the samples are extracted in an i.i.d.~fashion), \emph{we aim at quantifying} $V(\xopt)$.

To achieve such a characterization we provide some basic definitions. Let $\Phi\colon \Theta^M \to \Omega$ be a \emph{single-valued} mapping from the set of $M$-multisamples to the set of equilibria of $\game$.
\begin{rem} \label{rem:family_M}
The game $\game$, the set of NE $\Omega$, the mapping $\Phi$ (as well as of other associated quantities introduced in the sequel) depend on $M$ via the $M$-multisample employed. Therefore, they are parameterized by $M$, giving rise to a family of games, NE sets and mappings. To ease notation we do not show this dependency explicitly. Also, the dimension of the domain of $\Phi$ is to be intended in accordance with $M$.
\end{rem}
\begin{defn}[Support sample~\cite{CampiGaratti2008SIAM}]\label{defn_suppsample} 
	Fix any i.i.d.~$M$-multisample $(\theta_1,\ldots,\theta_M)\in\Theta^M$, and let $\xopt = \Phi(\theta_1,\ldots,\theta_M)$ be a NE of $\game$. Let $\xcirc = \Phi(\theta_1,\ldots,\theta_{s-1},\theta_{s+1},\ldots,\theta_M)$ be the solution obtained by discarding the sample $\theta_s$. We call the latter a \emph{support sample} if $\xcirc \neq \xopt$.
\end{defn}

\begin{defn}[Compression set --- adapted from~\cite{CampiEtAl2018TAC}]\label{defn_CS} 
	Fix any i.i.d. $M$-multisample $(\theta_1,\ldots,\theta_M)\in\Theta^M$, and let $\xopt = \Phi(\theta_1,\ldots,\theta_M)$ be a NE of $\game$. Consider any subset $\setC\subseteq \{\theta_1,\ldots,\theta_M\}$ and let $\xcirc = \Phi(\setC)$. We call $\setC$ a compression set if $\xcirc = \xopt$.
\end{defn}
The notion of compression set has appeared in the literature under different names; its properties are studied in full detail in \cite{CampiEtAl2018TAC}, where it is designated as support subsample. Here we adopt the term compression set as in \cite{FloydWarmuth1995,MargellosEtAl2015TAC} to avoid confusion with  Definition~\ref{defn_suppsample}.

Let $\mathfrak{C}(\theta_1,\ldots,\theta_M)$ be the collection of all compression sets associated with the $M$-multisample $\{\theta_1,\ldots,\theta_M \}$. We refer to the cardinality $|\setC|$ of some compression set $\setC\in\mathfrak{C}(\theta_1,\ldots,\theta_M)$ as the \emph{compression cardinality} $d^*$ (we do not  make explicit the dependence on the specific $\setC$ to ease notation, as the results below hold for any compression set in $\mathfrak{C}$). 
Note that $\mathfrak{C}$ --- hence also $d^*$ --- is itself a random variable as it depends on the $M$-multisample.

\begin{defn}[Non-degeneracy --- adapted from~\cite{CampiGaratti2018MATH}]\label{def_nondeg}
For any $M \in \mathbb{N}$, with $\mathds{P}^M$-probability equal to $1$, the NE $\xopt = \Phi(\theta_1,\ldots,\theta_M)$ coincides with the NE returned by $\Phi$ when the latter takes as argument \emph{only} the support samples. 
The corresponding game is then said to be non-degenerate; otherwise it is called degenerate.
\end{defn}
It follows that for non-degenerate problems the support samples form a compression set with $\mathds{P}^M$-probability 1. For degenerate problems the notions in Definitions \ref{defn_suppsample} and \ref{defn_CS} do not necessarily coincide; in particular, the support samples form a strict subset of any compression set in $\mathfrak{C}$. For a detailed discussion on degeneracy in scenario-based contexts, we refer the reader to \cite{Calafiore2010SIAM,CampiGaratti2018MATH}. 

\subsection{Main results}
\label{sec_thmstate}

We first show that a single-valued mapping $\Phi\colon \Theta^M \to \Omega$ from the set of $M$-multisamples to the set of NE of the game $\game$ indeed exists.
\begin{prop} \label{prop_decen}
Under Assumption~\ref{assum_eqNE_VIsol} there exists a single-valued decentralized mapping $\Phi\colon \Theta^M \to \Omega$.
\end{prop}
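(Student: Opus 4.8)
The plan is to construct $\Phi$ explicitly: reformulate $\game$ as a smooth min-max game whose equilibria solve a monotone variational inequality (VI), prove existence of a solution, and then single out one equilibrium via a regularization argument, so that the resulting map is single-valued even when $\Omega$ is not a singleton.

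First I would remove the only source of nondifferentiability, the $\max$ over scenarios. Exploiting the identity
\begin{equation*}
\max_{m\in\{1,\ldots,M\}} g(x_i,\xmenoi,\theta_m) = \max_{\lambda_i\in\Delta_M}\ \sum_{m=1}^{M}\lambda_{i,m}\, g(x_i,\xmenoi,\theta_m),
\end{equation*}
with $\Delta_M$ the probability simplex in $\mathbb{R}^M$, each agent's minimization becomes a min-max problem, yielding an augmented game $\gameaug$ in which agent $i$ controls $(x_i,\lambda_i)\in\setX_i\times\Delta_M$, minimizing over $x_i$ and maximizing over $\lambda_i$. The per-agent objective equals $\sum_m \lambda_{i,m}\big(f_i(x_i,\xmenoi)+g(x_i,\xmenoi,\theta_m)\big)$, which, because $\lambda_i$ lies in the simplex, is a convex combination of the functions in Assumption~\ref{assum_eqNE_VIsol}(i); it is therefore convex in $x_i$ and linear (hence concave) in $\lambda_i$, so $\gameaug$ is smooth and convex-concave, and its equilibria project onto $\Omega$.

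Next I would collect the first-order optimality conditions of $\gameaug$ over the compact convex set $K=\setX\times\Delta_M^{N}$ into a single VI$(K,F)$, whose map $F$ is continuous by Assumption~\ref{assum_eqNE_VIsol}(ii). Using Assumption~\ref{assum_eqNE_VIsol}(iii) I would verify that $F$ is monotone on $K$: the $x$-block inherits monotonicity from $\chi^f+\chi^g\geq 0$, while the bilinear coupling between $x$ and $\lambda$ enters skew-symmetrically and thus preserves monotonicity, as is standard for the saddle operator of a convex-concave function. Continuity and monotonicity of $F$ together with compactness and convexity of $K$ then guarantee, by classical existence theory for monotone VIs, that a solution exists and hence $\Omega\neq\emptyset$.

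Since $F$ is only monotone, VI$(K,F)$ may admit a continuum of solutions, so a selection mechanism is needed to obtain a single-valued $\Phi$. For each $\varepsilon>0$ the perturbed map $F+\varepsilon I$ is strongly monotone and VI$(K,F+\varepsilon I)$ has a unique solution $z_\varepsilon$; as $\varepsilon\downarrow 0$ the Tikhonov trajectory $\{z_\varepsilon\}$ converges to the \emph{least-norm} solution of VI$(K,F)$, which is unique, and its $x$-component defines $\Phi(\theta_1,\ldots,\theta_M)$ unambiguously. Decentralization follows because the regularized VI can be solved by a distributed projection/proximal iteration using only neighbour information, as developed in Section~\ref{sec_extgame} by bridging \cite{FacchineiEtAl2014} and \cite{ScutariEtAl2014}. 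I expect this last step to be the main obstacle: establishing that the regularized limit is a single well-defined point and that the associated decentralized scheme still converges under mere monotonicity of $F$, rather than the strong monotonicity one would have if $\chi^f+\chi^g>0$.
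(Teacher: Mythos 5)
Your overall strategy --- smooth reformulation of the $\max$ via the simplex identity \eqref{eq_maxcont}, concatenation into a monotone VI, and Tikhonov/proximal selection of a minimum-norm equilibrium to make $\Phi$ single-valued --- is exactly the paper's route. But there is a genuine gap in how you perform the augmentation, and it breaks the one lemma everything else rests on. You give \emph{each} agent its own multiplier $\lambda_i\in\Delta$, so the VI is posed on $\setX\times\Delta^N$ with blocks $F^{x_i}=\nabla_{x_i}f_i(x)+\sum_m \lambda_{i,m}\nabla_{x_i}g(x,\theta_m)$ and $F^{\lambda_i}=-\big(g(x,\theta_m)\big)_{m=1}^M$. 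Your claim that the $x$--$\lambda$ coupling is skew-symmetric is false in this formulation: $F^{x_i}$ is independent of $\lambda_j$ for $j\neq i$, while $F^{\lambda_j}$ depends on \emph{all} of $x$, including $x_i$, because $g$ couples the agents. Hence the symmetric part of the Jacobian acquires nonzero off-diagonal blocks (of the form $-\tfrac{1}{2}\nabla_{x_i}g(x,\theta_m)$ in the $(x_i,\lambda_{j})$ position, $j\neq i$) sitting against a zero $\lambda$--$\lambda$ diagonal block, which makes it indefinite. A minimal counterexample satisfying Assumption~\ref{assum_eqNE_VIsol} with $\chi^f=\chi^g=0$: take $f_i\equiv 0$ and $g(x,\theta_m)=c_m\trasp x$ linear. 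Then $F$ is affine with symmetric part
\begin{equation*}
\frac{1}{2}\big(JF+JF\trasp\big)=\begin{bmatrix}0 & C\\ C\trasp & 0\end{bmatrix},\qquad C\neq 0,
\end{equation*}
which is indefinite even along simplex-tangent directions whenever $c_1\neq c_2$ on the blocks of the \emph{other} agents. So your $F$ is not monotone on $K$; there is no single convex--concave potential here (each agent has its own saddle function, coupled through $x$), so the ``saddle operator'' argument does not apply. Consequently the Tikhonov trajectory $z_\varepsilon$ need not converge to a least-norm solution (that result requires monotonicity), the limit need not be well defined, and no convergence guarantee for the decentralized iteration is available --- the map $\Phi$ you construct is neither provably single-valued nor provably computable. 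Your per-agent-multiplier formulation is in effect the agent-dependent $g_i$ case that the paper explicitly sets aside as open (Remark~\ref{rem:multiple_g}, citing \cite[Rem.~1]{FacchineiEtAl2014}).

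The fix is the paper's construction: because $g$ is \emph{common}, introduce a single shared $y\in\Delta$ controlled by one additional $(N+1)$-th player, as in \eqref{eq_pullout_ind}--\eqref{eq_pullout_coord}. Then $F^{x_i}$ depends on $y$ exactly as $F^{y}$ depends on $x_i$, the coupling blocks in \eqref{eq_Jacobbe} satisfy $J_yF^x=(J_xF^y)\trasp$ and enter with opposite signs, and monotonicity of $F$ reduces (Lemma~\ref{lem_Fmono}) to $J_xF^x\succeq 0$, which follows from $\chi^f+\chi^g\geq 0$. For the selection and decentralization step --- the obstacle you correctly flag at the end --- the paper does not use a pure Tikhonov path but the hybrid proximal scheme with vanishing regularization $\eta^{(k)}$ of \cite[Alg.~4, Thm.~21]{ScutariEtAl2014} (Algorithm~\ref{alg_iterprox}, Proposition~\ref{lem_convalg}), which converges to the minimum-norm solution of \eqref{eq_NEVIsel} under \emph{mere} monotonicity; Proposition~\ref{prop_eqNE_VIsol} (via \cite[Thm.~1]{FacchineiEtAl2014}) then maps the limit back to a NE of $\game$, yielding the single-valued decentralized $\Phi$.
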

The mapping can be computed in a decentralized manner, thus fitting the inherent structure of the game. Its construction, and hence the proof of Proposition \ref{prop_decen}, is provided in Section~\ref{sec_extgame}.

\subsubsection{\emph{A posteriori} certificate}
We provide an \emph{a posteriori} quantification of an upper bound for $V(\xopt)$. This is summarized in the following theorem.
\begin{thm}\label{thm_apost}
Consider Assumption~\ref{assum_eqNE_VIsol}.
Fix $\beta\in(0,1)$ and let $\varepsilon\colon\{0,\ldots,M\}\to [0,1]$ be a function satisfying
		\begin{equation}\label{eq_eps}
		\varepsilon(M) = 1,\text{ and }\sum_{k=0}^{M-1}\binom{M}{k} (1-\varepsilon(k))^{M-k} = \beta.
		\end{equation}
		Let $\xopt = \Phi(\theta_1,\ldots,\theta_M)$, where $(\theta_1,\ldots,\theta_M)\in\Theta^M$ is an i.i.d.~sample from $\Theta$. We then have that
		\begin{equation}\label{eq_resthm}
		\mathds{P}^M\{(\theta_1,\ldots,\theta_M) \in \Theta^M:\, V(\xopt) \leq \varepsilon(d^*) \} \geq 1-\beta,
		\end{equation}
		where $d^*\leq M$ is the cardinality of any compression set of $\{\theta_1,\ldots,\theta_M\}$.
\end{thm}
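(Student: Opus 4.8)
The plan is to recognise Theorem~\ref{thm_apost} as a game-theoretic instance of the compression-based (\emph{a posteriori}) scenario theory of \cite{CampiEtAl2018TAC}, and to apply the latter once the correspondence between the two settings is made explicit. The first ingredient is the decision map: by Proposition~\ref{prop_decen}, $\Phi$ is single-valued, so every multisample is sent to a well-defined NE $\xopt$, and the events appearing below are measurable and invariant under relabelling of the samples. The second ingredient is the notion of violation: I would declare a fresh scenario $\theta$ to be \emph{violated} by $\xopt$ exactly when $\xopt\notin\Omega^+$, i.e.\ when $\xopt$ ceases to be a NE of the augmented game $\game^+$. With this identification $V(\xopt)$ is precisely the violation probability of the decision $\xopt$, and $1-V(\xopt)$ is the probability that a fresh scenario leaves $\xopt$ a NE. The third ingredient is the compression set of Definition~\ref{defn_CS}, which plays the role of the support subsample of \cite{CampiEtAl2018TAC}: by definition there exists $\setC\subseteq\{\theta_1,\ldots,\theta_M\}$ with $|\setC|=d^*$ and $\Phi(\setC)=\xopt$.

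With these correspondences in place, the engine behind the bound is an exchangeability-and-counting argument. Fix a level $k\in\{0,\ldots,M-1\}$ and estimate the probability of the unfavourable event $\{d^*=k,\ V(\xopt)>\varepsilon(k)\}$. Summing over the $\binom{M}{k}$ possible positions of the compression set, I would condition on the $k$ scenarios that form $\setC$: these already determine $\xopt=\Phi(\setC)$, while the remaining $M-k$ scenarios are, conditionally, i.i.d.\ draws from $\mathds{P}$. The link between compression and violation is that each discarded scenario must leave $\xopt$ a NE --- otherwise removing it from the full multisample would alter the solution, contradicting $\Phi(\setC)=\Phi(\theta_1,\ldots,\theta_M)$ --- so, conditionally on $\xopt$, each of the $M-k$ scenarios is non-violated with probability $1-V(\xopt)<1-\varepsilon(k)$. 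This yields the per-level estimate $\binom{M}{k}(1-\varepsilon(k))^{M-k}$; summing over $k$, and noting that $\varepsilon(M)=1$ renders the level $k=M$ vacuous, the total probability of failure is at most $\sum_{k=0}^{M-1}\binom{M}{k}(1-\varepsilon(k))^{M-k}=\beta$, i.e.\ $\mathds{P}^M\{V(\xopt)>\varepsilon(d^*)\}\le\beta$, which establishes \eqref{eq_resthm}.

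The delicate step, which I expect to be the main obstacle, is to make the conditioning rigorous. Because the identity of the compression set depends on the whole multisample, one must fix a deterministic rule that selects a single compression set for each realisation (to avoid double counting across the $\binom{M}{k}$ subsets), and then verify that, conditioned on that selection, the complementary scenarios genuinely retain their i.i.d.\ structure and that their non-violation is equivalent to their being redundant for $\xopt$. Here the max-of-scenarios structure of \eqref{eq_obj} and the variational characterisation underlying the construction of $\Phi$ in Section~\ref{sec_extgame} (together with the compression characterisation of Section~\ref{sec_apost_eval}) are what guarantee that the binding scenarios at $\xopt$ lie in $\setC$, so that violation with respect to the full game $\game^+$ coincides with violation with respect to the reduced game. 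Crucially, this entire argument never invokes the non-degeneracy property of Definition~\ref{def_nondeg}: the certificate is expressed through the \emph{observed} cardinality $d^*$ rather than through the support samples of Definition~\ref{defn_suppsample}, which is exactly what makes it an \emph{a posteriori} guarantee valid for the possibly degenerate instances typical of games.
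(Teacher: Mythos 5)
You have correctly identified the engine behind Theorem~\ref{thm_apost} --- the compression-based \emph{a posteriori} theory of \cite{CampiEtAl2018TAC} --- but your instantiation of it does not go through, and the missing piece is precisely the substantive part of the paper's proof. You declare a fresh scenario $\theta$ ``violated'' when $\xopt\notin\Omega^+$. However, $\Omega^+$ is the NE set of the game built on \emph{all} of $\{\theta_1,\ldots,\theta_M\}\cup\{\theta\}$, so through the $\max$ term in \eqref{eq_obj} the event $\{\theta\colon \xopt\notin\Omega^+\}$ depends on the entire multisample, not on the pair $(\xopt,\theta)$ alone. The framework of \cite{CampiEtAl2018TAC} --- and equally your exchangeability-and-counting re-derivation of its proof --- requires violation to be encoded by a family of admissible sets $\mathcal{A}_\theta$ indexed by $\theta$ alone, so that, conditional on the $k$ scenarios forming $\setC$ (hence on the decision $\Phi(\setC)$), the non-violation indicators of the remaining $M-k$ scenarios are i.i.d.~Bernoulli with parameter $1-V$, where $V$ is a function of the decision. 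With your notion of violation, $V(\xopt)$ is not a function of $\xopt$ (it is random even for fixed $\xopt$), so the per-level estimate $\binom{M}{k}(1-\varepsilon(k))^{M-k}$ has no justification. Relatedly, your stated link --- ``each discarded scenario must leave $\xopt$ a NE, otherwise removing it would alter the solution'' --- is a non sequitur: removal altering the solution is the support-sample property of Definition~\ref{defn_suppsample}, which concerns deletion, and says nothing about NE persistence under the addition of a scenario; in your notion, consistency of the observed scenarios is in fact trivially true (appending a duplicate sample does not change the $\max$), which is a sign that the notion is not the one the counting argument needs.

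The paper resolves exactly this obstruction by changing the decision and the violation event. It applies \cite{CampiEtAl2018TAC} to the augmented decision $(\xopt,\gammaopt)$, with $\gammaopt=\max_{m}g(\xopt,\theta_m)$, and the decision-determined sets $H_\theta=\{(x,\gamma)\colon g(x,\theta)\le\gamma\}$; the consistency condition $(\xopt,\gammaopt)\in H_{\theta_m}$ for all $m$ follows from the epigraphic reformulation \eqref{eq_epigame} of the NE condition. This yields, with confidence $1-\beta$, the bound $\mathds{P}\{\theta\colon g(\xopt,\theta)>\gammaopt\}\le\varepsilon(d^*)$. Then --- and this is the step your proposal flags as ``delicate'' but never carries out --- a deterministic variational-inequality argument establishes the inclusion $\{\theta\colon g(\xopt,\theta)\le\gammaopt\}\subseteq\{\theta\colon \xopt\in\Omega^+\}$: one extends $\yopt$ by the zero weight $y^+_{M+1}=0$ and verifies, using \eqref{eq_maxcont}, that $(\xopt,(y^{\ast\traspo},0)\trasp)$ satisfies the VI \eqref{eq:eq_constrScn_p} of the augmented game $\gameaug^+$, whence $\xopt\in\Omega^+$ by Proposition~\ref{prop_eqNE_VIsol}; this gives $V(\xopt)\le\mathds{P}\{\theta\colon g(\xopt,\theta)>\gammaopt\}$ and hence \eqref{eq_resthm}. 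Your appeal to ``the binding scenarios lie in $\setC$'' gestures toward this inclusion, but without the augmented-game construction it remains unproved --- and it is the only place where the game structure, rather than abstract compression theory, actually enters the proof.
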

\vspace{0.2cm}

Theorem~\ref{thm_apost} shows that --- with confidence at least $1-\beta$ --- the probability that a NE $\xopt$ of $\game$ does not ``remain'' an equilibrium of $\game^+$ (i.e., when an additional sample $\theta\in\Theta$ is considered) is at most $\varepsilon(d^*)$. Note that \eqref{eq_resthm} captures the generalization properties of $\xopt$, where $1-\beta$ accounts for the `probably' and $\varepsilon(d^*)$ for the `approximately correct' term used within a PAC learning framework.
The value of $\varepsilon(\cdot)$ is defined in accordance to \cite{CampiEtAl2018TAC} and depends on the \emph{observed} compression cardinality $d^*$, which in turn depends on the random multiextraction $\{\theta_1,\ldots,\theta_M\}$ thus giving rise to the \emph{a posteriori} nature of the result. As a consequence, the level of conservatism of the obtained certificate depends on $d^*$; the smaller the cardinality of the computed compression set, the tighter the bound; see Section~\ref{sec_apost_eval} for a detailed elaboration on the computation of $d^*$. 
The proof of Theorem~\ref{thm_apost} is provided in Section~\ref{sec_thmproof_apost}.

In the case of a non-degenerate game (see Definition \ref{def_nondeg}), the bound could be significantly improved by means of the \emph{wait-and-judge} analysis of~\cite{CampiGaratti2018MATH}: specifically, by Theorem 2 in \cite{CampiGaratti2018MATH}, we can replace the expression for $\varepsilon(\cdot)$ in \eqref{eq_eps} with $\varepsilon(k) = 1-t(k)$, where $t(k)$ is the unique solution in $(0,1)$ of
\begin{equation}
\frac{\beta}{M+1} \sum_{m=k}^M {m \choose k} t^{m-k} - {M \choose k} t^{M-k} = 0. \label{eq:eps_tight}
\end{equation}
However, note that non-degeneracy is a condition in general difficult to verify even in convex optimization settings, a challenge that becomes more prominent in games.


\subsubsection{\emph{A priori} certificate}
We now provide an \emph{a priori} quantification of an upper-bound of $V(\xopt)$. This is summarized in the following theorem.

\begin{thm}\label{thm_apriori}
Consider Assumption~\ref{assum_eqNE_VIsol}, and further assume that the game is non-degenerate according to Definition \ref{def_nondeg}.
Fix $\beta\in(0,1)$ and consider $\varepsilon\colon\{0,\ldots,M\}\to [0,1]$ be a function satisfying \eqref{eq_eps}.	
Let $\xopt = \Phi(\theta_1,\ldots,\theta_M)$, where $(\theta_1,\ldots,\theta_M)\in\Theta^M$ is an i.i.d.~multisample. We then have that
		\begin{equation}\label{eq_resthm_apriori}
		\mathds{P}^M\{(\theta_1,\ldots,\theta_M) \in \Theta^M:\, V(\xopt) \leq \varepsilon((n+1)N) \} \geq 1-\beta.
		\end{equation}
\end{thm}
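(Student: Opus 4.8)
The plan is to obtain the \emph{a priori} bound as a corollary of the \emph{a posteriori} statement of Theorem~\ref{thm_apost}, by replacing the random compression cardinality $d^*$ with the deterministic dimension-based quantity $(n+1)N$. First I would record the monotonicity of $\varepsilon(\cdot)$: taking $\varepsilon$ to be the non-decreasing function associated with \eqref{eq_eps} (each additional support sample can only enlarge the admissible violation level), we have that $d^* \leq (n+1)N$ implies $\varepsilon(d^*) \leq \varepsilon((n+1)N)$. Consequently, on the event $\{V(\xopt)\leq\varepsilon(d^*)\}$, provided also that $d^*\leq (n+1)N$, it follows that $V(\xopt)\leq\varepsilon((n+1)N)$. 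Hence the set inclusion
\[
\{V(\xopt)\leq\varepsilon(d^*)\}\cap\{d^*\leq (n+1)N\}\subseteq\{V(\xopt)\leq\varepsilon((n+1)N)\}
\]
reduces the theorem to showing that $d^*\leq (n+1)N$ with $\mathds{P}^M$-probability $1$, since intersecting the probability-$(1-\beta)$ event of Theorem~\ref{thm_apost} with a probability-one event preserves the $1-\beta$ confidence.

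Next I would exploit the non-degeneracy hypothesis (Definition~\ref{def_nondeg}). As noted after that definition, for non-degenerate games the support samples (Definition~\ref{defn_suppsample}) form a compression set (Definition~\ref{defn_CS}) with probability $1$; I may therefore take $d^*$ equal to the number of support samples, so the task becomes bounding the cardinality of the support-sample set by $(n+1)N$. To expose the relevant finite-dimensional structure I would pass to the epigraph reformulation of each agent's problem: introducing an auxiliary scalar $t_i$, agent $i$'s best response $\min_{x_i\in\setX_i} f_i(x_i,\xmenoi)+\max_{m} g(x_i,\xmenoi,\theta_m)$ is recast as $\min_{(x_i,t_i)} f_i(x_i,\xmenoi)+t_i$ subject to $t_i\geq g(x_i,\xmenoi,\theta_m)$, $m=1,\ldots,M$. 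This realizes the $M$ scenarios as convex constraints acting on the extended per-agent variable $(x_i,t_i)\in\mathbb{R}^{n+1}$, so that the augmented game of Section~\ref{sec_extgame} (and its VI characterization underlying Proposition~\ref{prop_decen}) carries $(n+1)N$ degrees of freedom in total.

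The crux, and the step I expect to be the main obstacle, is the counting argument bounding the number of support samples by $(n+1)N$. The first half is a localization observation: if a scenario $\theta_s$ is inactive at the equilibrium for every agent, i.e.\ $g(\xopt_i,\xopt_{-i},\theta_s)$ lies strictly below $\max_m g(\xopt_i,\xopt_{-i},\theta_m)$ for all $i\in\setN$, then deleting it leaves each agent's objective unchanged in a neighborhood of $\xopt$; hence $\xopt$ remains an equilibrium of the reduced game and, by single-valuedness of $\Phi$ (Proposition~\ref{prop_decen}), $\theta_s$ is not a support sample. It then remains to control the number of \emph{active} scenarios, for which I would transfer the classical scenario-optimization fact that a convex program in $d$ variables admits at most $d$ support constraints to the augmented, monotone VI characterizing $\xopt$, invoking a Helly-type argument against the $(n+1)N$ degrees of freedom. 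The delicate point is that a support sample is a \emph{global} notion defined through the fixed-point mapping $\Phi$, whereas activity is \emph{local} to each agent's constraints; reconciling these—so that the deletion of a globally non-influential scenario can be certified via the monotonicity of Assumption~\ref{assum_eqNE_VIsol} together with the uniqueness (or unique-aggregate) structure, and so that the aggregate influential-active count is capped by the total dimension rather than spuriously inflated by the inter-agent coupling—is where the argument must be carried out with care.
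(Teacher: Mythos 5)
Your outer reduction is sound and matches the paper's closing step: Theorem~\ref{thm_apost} holds for the cardinality of any compression set, so once the support-sample count is bounded almost surely by $(n+1)N$, one applies it with $d^*=(n+1)N$. The gap is in the central counting step, which you yourself flag as the obstacle but then propose to resolve by ``transferring'' the support-constraint bound of convex scenario programs to the augmented monotone VI via a Helly-type argument. No such transfer is available: the classical bound of Campi--Garatti/Calafiore is a statement about convex \emph{optimization} programs with a (unique or tie-broken) minimizer, and it has no known analogue for solutions of monotone VIs or Nash equilibria selected by a fixed-point iteration. The paper bridges exactly this gap with a concrete construction you are missing: it introduces the auxiliary strictly convex scenario program \eqref{eq_epigame_sum}, namely the epigraphic reformulation of the \emph{regularized best responses} \eqref{eq_pullout_ind_tau} at convergence ($\eta=0$, centre of regularization $\bar{x}=\xopt$), whose unique solution is $\xopt$ --- equivalently, $\Phi_{\xopt}$, the algorithm's map initialized at its own fixed point. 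It then shows by contradiction, using non-degeneracy (so that the minimal compression set $\setC_0$ of the game coincides with the support samples of Definition~\ref{defn_suppsample}), that $\setC_0$ is contained in every compression set of this convex program; the classical $(n+1)N$ bound applied to the convex program then caps $|\setC_0|$. Your plan never produces an optimization program to which the dimension bound applies.

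Two further points would break your sketch even as stated. First, your localization claim --- ``$\theta_s$ inactive at $\xopt$ for every agent implies $\theta_s$ is not a support sample'' --- does not follow from single-valuedness of $\Phi$: deleting a sample can enlarge the solution set of the reduced augmented game, and the minimum-norm tie-break may then select a \emph{different} equilibrium even though $\xopt$ remains one; this is precisely why Proposition~\ref{prop_apost_eval} needs uniqueness (or unique-aggregate) hypotheses, which Theorem~\ref{thm_apriori} does not assume, and why the paper's proof instead anchors the auxiliary program at $\xopt$ through the regularization term $\tau\|x_i-\xopt_i\|^2$. Second, your epigraph is placed on $g$ alone ($t_i\geq g(x_i,\xmenoi,\theta_m)$), but Assumption~\ref{assum_eqNE_VIsol} only guarantees convexity of the \emph{sum} $f_i(\cdot,x_{-i})+g(\cdot,x_{-i},\theta)$, so your constraint sets may be nonconvex and the scenario bound inapplicable; the paper epigraphs $f_i+g$ jointly, which is exactly what forces one epigraphic variable per agent and yields the count $(n+1)N$ (if $f_i$ and $g$ were each convex, a single shared epigraphic variable would suffice and the bound would improve to $nN+1$, as the paper remarks). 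Your accounting of $(n+1)N$ degrees of freedom is therefore right for the wrong reason.
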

\vspace{0.2cm}

The proof of Theorem \ref{thm_apriori} is provided in Section~\ref{sec_thmproof_new}.
Although similar in form to Theorem \ref{thm_apost}, the bound on $V(\xopt)$ provided by Theorem \ref{thm_apriori} additionally relies on the developments in \cite{CampiGaratti2008SIAM,Calafiore2010SIAM}: these results are independent of the given multisample and linked instead to the problem structure. In this way, $\varepsilon(\cdot)$ is evaluated on the sample-independent quantity $(n+1)N$, expressing the dimension $nN$ of the agents' decision space plus $N$ additional variables, explained by the epigraphic reformulation introduced in the proof of Theorem \ref{thm_apriori}. If we further assume that for all $i\in\setN$, for every fixed $x_{-i}\in\setX_{-i}$ and $\theta\in\Theta$, both $f_i(\cdot,x_{-i})$ and $g(\cdot,x_{-i},\theta)$ are convex, we would only need one epigraphic variable, hence the argument of $\varepsilon(\cdot)$ could be replaced by $nN+1$ (see Section~\ref{sec_thmproof_new}).

Since we strengthen here the assumptions of Theorem \ref{thm_apriori} by imposing a non-degeneracy condition (see Definition \ref{def_nondeg}), \eqref{eq_eps} could be directly replaced by the tighter expression \eqref{eq:eps_tight}.
We wish to emphasize that, even if the non-degeneracy assumption holds, it may still be preferable to calculate the cardinality $d^*$ in an \emph{a posteriori} fashion, as in certain problems the latter might be significantly lower compared to $(n+1)N$. This is also the case in the electric vehicle charging control problem of Section \ref{sec_example}.

\begin{cor} \label{cor:cost_deter}
Let $\xopt = \Phi(\theta_1,\ldots,\theta_M)$ and define
\begin{displaymath}
	V_c(\xopt) = \mathds{P}\{\theta\in\Theta\colon g(\xopt,\theta)>\max_{m\in\{1,\ldots,M\}} g(\xopt,\theta_m) \}.
\end{displaymath}
Then,
\begin{enumerate}
\item Under the assumptions of Theorem \ref{thm_apost}, \eqref{eq_resthm} holds with $V_c(\xopt)$ in place of $V(\xopt)$.
\item Under the assumptions of Theorem \ref{thm_apriori}, \eqref{eq_resthm_apriori} holds with $V_c(\xopt)$ in place of $V(\xopt)$.
\end{enumerate}
\end{cor}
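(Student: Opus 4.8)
The plan is to recognise that $V_c(\xopt)$ is exactly the constraint-violation probability attached to the epigraphic form of $\game$, and to re-use the scenario arguments behind Theorems~\ref{thm_apost} and~\ref{thm_apriori} on that form rather than to compare $V_c$ with $V$ directly. First I would pin down the only pointwise relation that actually holds between the two risks. Write $J_i^+$ for the objective \eqref{eq_obj} of $\game^+$, i.e. with the maximum taken over $\{\theta_1,\ldots,\theta_M,\theta\}$. Adding a term to a maximum can only increase it, so $J_i^+(\cdot,\xopt_{-i})\ge J_i(\cdot,\xopt_{-i})$ pointwise; and if $g(\xopt,\theta)\le\max_m g(\xopt,\theta_m)$ then the maximum at $\xopt$ is unchanged, so $J_i^+(\xopt_i,\xopt_{-i})=J_i(\xopt_i,\xopt_{-i})$. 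Since $\xopt_i\in\argmin_{x_i} J_i(x_i,\xopt_{-i})$, chaining these gives $J_i^+(x_i,\xopt_{-i})\ge J_i^+(\xopt_i,\xopt_{-i})$ for all $x_i$ and every $i$, i.e. $\xopt\in\Omega^+$. Contrapositively $\{\xopt\notin\Omega^+\}\subseteq\{g(\xopt,\theta)>\max_m g(\xopt,\theta_m)\}$, so $V(\xopt)\le V_c(\xopt)$. I would stress that this is the \emph{opposite} of what a naive transfer needs: inheriting the upper bounds \eqref{eq_resthm}--\eqref{eq_resthm_apriori} would require $V_c\le V$, which fails in general. Hence the corollary is not a consequence of the $V$-bounds and must be proved by a direct application of the scenario machinery to a reformulation for which $V_c$ is the native quantity.

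That reformulation is the epigraphic game introduced in the proof of Theorem~\ref{thm_apriori}: each agent $i$ minimises $f_i(x_i,x_{-i})+t_i$ over $(x_i,t_i)$ subject to the scenario constraints $g(x_i,x_{-i},\theta_m)\le t_i$, $m=1,\ldots,M$. Any NE $\xopt$ of $\game$ corresponds to a NE $(\xopt,t^\ast)$ of the epigraphic game with $t_i^\ast=\max_m g(\xopt,\theta_m)$ for all $i$ (the same value for every agent, since $g$ is common and evaluated at the common $\xopt$). The key observation is that the fresh sample $\theta$ contributes the constraint $g(x_i,x_{-i},\theta)\le t_i$, which is violated at $(\xopt,t^\ast)$ precisely when $g(\xopt,\theta)>t_i^\ast=\max_m g(\xopt,\theta_m)$ — exactly the event defining $V_c(\xopt)$. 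Moreover violation renders $(\xopt,t^\ast)$ infeasible, hence not a NE of the augmented epigraphic game, whereas a slack new constraint leaves $(\xopt,t^\ast)$ a NE by the Case-1 argument above. Thus $V_c(\xopt)$ equals the probability that $(\xopt,t^\ast)$ ceases to be a NE of the augmented epigraphic game, which is precisely the quantity bounded when the proofs of Theorems~\ref{thm_apost} and~\ref{thm_apriori} are run on the epigraphic reformulation in place of $\game$.

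It then suffices to replay those two proofs on the reformulation. For the \emph{a posteriori} part I would invoke the compression argument of~\cite{CampiEtAl2018TAC} applied to the single-valued map induced by $\Phi$ together with $t^\ast$, yielding $\mathds{P}^M\{V_c(\xopt)\le\varepsilon(d^\ast)\}\ge1-\beta$ with $d^\ast$ the compression cardinality of Definition~\ref{defn_CS}. For the \emph{a priori} part I would apply~\cite{CampiGaratti2008SIAM} to the reformulation, whose decision vector $(x,t)$ has dimension $(n+1)N$, obtaining $\mathds{P}^M\{V_c(\xopt)\le\varepsilon((n+1)N)\}\ge1-\beta$; when each $f_i(\cdot,x_{-i})$ and $g(\cdot,x_{-i},\theta)$ is convex a single epigraphic variable suffices and the argument becomes $nN+1$. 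This reproduces \eqref{eq_resthm} and \eqref{eq_resthm_apriori} verbatim with $V_c$ in place of $V$, and in fact exhibits $V_c$ as the intermediate bound from which the $V$-statements of the theorems were obtained via $V\le V_c$.

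The main obstacle I anticipate is the bookkeeping of the compression/support structure under the lift to the epigraphic game. A set $\setC$ with $\Phi(\setC)=\xopt$ is a compression set for $\game$, but for the epigraphic game one additionally needs $\max_{m\in\setC}g(\xopt,\theta_m)=t^\ast$, i.e. $\setC$ must retain a maximiser; I would therefore have to confirm that the cardinality $d^\ast$ reported by the proof of Theorem~\ref{thm_apost} is exactly that of the epigraphic problem, so that the same $\varepsilon(d^\ast)$ applies. For the \emph{a priori} statement I would likewise check that the epigraphic game inherits Assumption~\ref{assum_eqNE_VIsol} and the non-degeneracy of Definition~\ref{def_nondeg}, so that~\cite{CampiGaratti2008SIAM} is applicable with the dimension count $(n+1)N$. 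Once this consistency between the original and lifted compression structures is settled, both parts of the corollary follow at once.
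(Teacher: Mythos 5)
Your proposal follows essentially the paper's own route: the bound on $V_c(\xopt)$ is precisely the intermediate inequality \eqref{eq_coroll2} that the paper establishes inside the proof of Theorem~\ref{thm_apost} by applying \cite{CampiEtAl2018TAC} to the pair $(\xopt,\gammaopt)$ with the sets $H_{\theta}=\{(x,\gamma)\colon g(x,\theta)\leq\gamma\}$, consistency being supplied by the epigraphic reformulation \eqref{eq_epigame}; the \emph{a priori} version is then obtained, as you say, by bounding the compression cardinality of the convex epigraphic program \eqref{eq_epigame_sum} by $(n+1)N$ via \cite{CampiGaratti2008SIAM}. Your preliminary observation that $V(\xopt)\leq V_c(\xopt)$ — so the corollary cannot be inherited from the theorems — is exactly the paper's \eqref{eq_probbound}, and your pointwise max-monotonicity argument for it is in fact more elementary than the paper's detour through the augmented VI \eqref{eq:eq_constrScn_p}.

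One genuine slip in your \emph{a priori} setup: you lift with constraints $g(x_i,x_{-i},\theta_m)\leq t_i$ and $f_i$ left in the objective. Under Assumption~\ref{assum_eqNE_VIsol} only $f_i+g$ is guaranteed convex, so this feasible set need not be convex and \cite{CampiGaratti2008SIAM} does not apply to it; moreover, since $g$ is common, your $t_i$ all coincide, so the correct dimension count for \emph{that} lift (valid only under separate convexity of $f_i$ and $g$) is $nN+1$, not $(n+1)N$. The $(n+1)N$ count belongs to the paper's lift with constraints $f_i(x_i,\xmenoi^{\ast})+g(x_i,\xmenoi^{\ast},\theta_m)\leq\gamma_i$ as in \eqref{eq_epigame_sum}, which is convex under the standing assumption and genuinely requires one epigraphic variable per agent. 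Finally, your bookkeeping concern — that a compression set for $\xopt$ must also retain a maximiser so that $\gammaopt$ is preserved when invoking \cite{CampiEtAl2018TAC} for the pair $(\xopt,\gammaopt)$ — is well placed; the paper does not address it explicitly, but the compression sets it actually computes (the set $\setY^{\ast}$ of Proposition~\ref{prop_apost_eval}) do retain maximisers by complementary slackness \eqref{eq_KKTy}, which settles the point.
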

Corollary \ref{cor:cost_deter} shows that, with given confidence, the probability that $g(\xopt,\theta)$ --- hence also each agent's objective function --- deteriorates when a new realization of the uncertainty is encountered can be bounded both in an \emph{a posteriori} and an \emph{a priori} fashion as in Theorem \ref{thm_apost} and Theorem \ref{thm_apriori}, respectively. These statements are established within the proofs of Theorems \ref{thm_apost} and \ref{thm_apriori} (see \eqref{eq_coroll2}). 

\begin{rem}\label{rem:multiple_g}
	The results of Theorems \ref{thm_apost} and \ref{thm_apriori} can be adapted to the case where the uncertain part of the objective function is different for each agent, i.e., if $g$ is replaced by $g_i$, $i\in \setN$. We keep our presentation with a common $g$ since for this case we are able to construct $\Phi$ in a decentralized manner, as shown in Section \ref{sec_extgame}; the decentralized computation of $\Phi$ if the uncertain part of the objective function is different for each agent encompasses additional challenges (see \cite[Rem. 1]{FacchineiEtAl2014}) and is outside the scope of our paper. 
\end{rem}
	
\section{Decentralized NE computation}
\label{sec_extgame}
In this section we show how to construct $\Phi$, necessary ingredient in the proof of Proposition~\ref{prop_decen}. In particular, we show that the image of $\Phi$ corresponds to the limit of a decentralized algorithm that returns a NE of the game $\game$. To achieve this, we characterise the NE of $\game$ as solutions to a variational inequality (VI) \cite{FacchineiPang2009BOOK}. We then leverage results in the literature to obtain sufficient conditions for the existence of equilibria, and set the foundations for the design of a decentralized NE computation mechanism \cite{ScutariEtAl2014}.

\subsection{VI analysis}
\label{sec_VI}
It can be observed that the presence of the $\max$ operator renders agents' objective functions \eqref{eq_obj} non-differentiable. To circumvent the computation of sub-gradients and exploit the wide range of algorithms available to solve VIs in the differentiable case, we follow the method in \cite{FacchineiEtAl2014} and define the augmented game $\gameaug$ between $N+1$ agents.
In $\gameaug$ each player $i\in\setN$, given $\xmenoi$ and $y = (y_m)_{m=1}^M$, computes
\begin{equation}\label{eq_pullout_ind}
x_i \in \argmin_{\nu_i\in\setX_i} f_i(\nu_i,\xmenoi) + \underbrace{\sum_{m=1}^M y_m g(\nu_i,\xmenoi,\theta_m)}_{\hat{g}(\nu_i,\xmenoi,y)} ,
\end{equation}
where $\hat{g}(x,y)$ follows from the equivalence, holding for any $x$,
\begin{equation}\label{eq_maxcont}
\max_{m\in\{1,\ldots,M\}} g(x,\theta_m) = \max_{y\in\Delta} \sum_{m=1}^M y_m g(x,\theta_m),
\end{equation}
where $\Delta = \left\{y\in\mathbb{R}^M\colon y\geq 0, \sum_{m=1}^M y_m = 1\right\}$ is the simplex in $\mathbb{R}^M$~\cite[Lemma~6.2.1]{RustemHoweBOOK}.
The additional agent (could be thought of as a coordinating authority), given $x$, will act instead as a maximizing player for the uncertain component of $J_i$, $i\in\setN$, i.e., 
\begin{equation}\label{eq_pullout_coord}
y \in \argmax_{\nu\in\Delta} \hat{g}(x,\nu).
\end{equation}

Note that, for any $M$-multisample $(\theta_1,\ldots,\theta_M)\in\Theta^M$, the objective functions in \eqref{eq_pullout_ind} and \eqref{eq_pullout_coord} are differentiable by Assumption~\ref{assum_eqNE_VIsol}. We now link the NE of the augmented game $\gameaug$ to a VI.
To this end, we define the mapping $F(x,y)\colon \setX \times \Delta \to \mathbb{R}^{(nN + M)}$ as the pseudo-gradient~\cite[\S 1.4.1]{FacchineiPang2009BOOK}
\begin{equation}\label{eq_F}
F(x,y) = \left[
\begin{array}{c}
\left(\nabla_{x_i}f_i(x) + \nabla_{x_i}\hat{g}(x,y)\right)_{i\in\setN} \\[6pt]
-\left(\nabla_{y_m}\hat{g}(x,y)\right)_{m=1}^M
\end{array}\right].
\end{equation}

Letting $z = (x,y)$, the VI problem takes the form \cite[\S 1.4.2]{FacchineiPang2009BOOK}
\begin{equation}
\label{eq_NEVI}
\begin{split}
\mathrm{find} \;  & {}\zopt \in \setX\times\Delta\\
\mathrm{subject~to} \;  & {}(z-\zopt)\trasp F(\zopt) \geq 0,\; \forall z\in\setX\times\Delta.
\end{split}
\end{equation}
The constraints in \eqref{eq_NEVI}
represent the concatenation of the first-order optimality conditions for the $N+1$ individual problems described by~\eqref{eq_pullout_ind} and \eqref{eq_pullout_coord}. In the following, we refer to the problem described by \eqref{eq_NEVI} as VI$(F,\setX\times\Delta)$. 

While the VI$(F,\setX\times\Delta)$ describes (under the conditions in Assumption~\ref{assum_eqNE_VIsol}) the NE of $\gameaug$, it turns out that the former can be also linked to the equilibria of $\game$, as formalized next. 


\begin{prop}\label{prop_eqNE_VIsol}
Under Assumption~\ref{assum_eqNE_VIsol}, there always exists a solution of VI$(F,\setX\times\Delta)$. Denote such a solution by $\zopt = (\xopt,\yopt)$. We then have that $\xopt$ is a NE of $\game$.
\end{prop}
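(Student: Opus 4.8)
The plan is to prove the two assertions in turn: first the existence of a solution of VI$(F,\setX\times\Delta)$, and then the fact that the $x$-block of any such solution is a NE of \game. For existence, I would invoke the classical result that a VI defined by a continuous operator over a nonempty, compact, convex set always admits a solution (a consequence of Brouwer's fixed-point theorem; see, e.g., \cite[Cor.~2.2.5]{FacchineiPang2009BOOK}). The feasible set $\setX\times\Delta$ is compact and convex, since $\setX$ is a product of compact convex sets by Assumption~\ref{assum_eqNE_VIsol}(i) and $\Delta$ is the standard simplex. Continuity of $F$ follows since $f_i$ and $g(\cdot,\theta_m)$ are continuously differentiable by Assumption~\ref{assum_eqNE_VIsol}(i), so that the partial gradients assembled in \eqref{eq_F}, including those of $\hat{g}(x,y)=\sum_{m} y_m g(x,\theta_m)$, are continuous.

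For the second assertion, I would exploit the product structure of $\setX\times\Delta$ to decompose the VI inequality evaluated at $\zopt=(\xopt,\yopt)$. Restricting to feasible points that perturb only the $y$-block gives $-(y-\yopt)\trasp(\nabla_{y_m}\hat{g}(\zopt))_{m=1}^M\ge 0$ for all $y\in\Delta$; that is, $\yopt\in\argmax_{y\in\Delta}\hat{g}(\xopt,y)$, whence by \eqref{eq_maxcont} we obtain $\hat{g}(\xopt,\yopt)=\max_{m}g(\xopt,\theta_m)$. Restricting instead to points that perturb only the $x_i$-block (admissible because $\setX$ is a product set) gives, for each $i\in\setN$, the first-order condition $(x_i-\xopt_i)\trasp\big(\nabla_{x_i}f_i(\xopt)+\nabla_{x_i}\hat{g}(\xopt,\yopt)\big)\ge 0$ for all $x_i\in\setX_i$. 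Since $f_i(\cdot,\xopt_{-i})+\hat{g}(\cdot,\xopt_{-i},\yopt)=\sum_{m}\yopt_m\big(f_i(\cdot,\xopt_{-i})+g(\cdot,\xopt_{-i},\theta_m)\big)$ is a convex combination of the convex functions of Assumption~\ref{assum_eqNE_VIsol}(i), this condition certifies that $\xopt_i$ minimizes $f_i(\cdot,\xopt_{-i})+\hat{g}(\cdot,\xopt_{-i},\yopt)$ over $\setX_i$.

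To close the argument I would recognize $(\xopt_i,\yopt)$ as a saddle point of $L_i(x_i,y)=f_i(x_i,\xopt_{-i})+\sum_{m}y_m g(x_i,\xopt_{-i},\theta_m)$ over $\setX_i\times\Delta$: the $x_i$-block condition says $\xopt_i$ minimizes $L_i(\cdot,\yopt)$, while the $y$-block condition says $\yopt$ maximizes $L_i(\xopt_i,\cdot)$, as $f_i$ does not depend on $y$. Because $L_i$ is convex in $x_i$, affine in $y$, and $\setX_i,\Delta$ are compact convex, the minimax equality holds, so $\min_{x_i\in\setX_i}\max_{y\in\Delta}L_i(x_i,y)=L_i(\xopt_i,\yopt)$. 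Applying \eqref{eq_maxcont} once more yields $\max_{y\in\Delta}L_i(x_i,y)=f_i(x_i,\xopt_{-i})+\max_{m}g(x_i,\xopt_{-i},\theta_m)=J_i(x_i,\xopt_{-i})$, while the first step gives $L_i(\xopt_i,\yopt)=J_i(\xopt_i,\xopt_{-i})$. Hence $\xopt_i\in\argmin_{x_i\in\setX_i}J_i(x_i,\xopt_{-i})$ for every $i\in\setN$, i.e., $\xopt\in\Omega$.

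I expect the main obstacle to be this last step. The VI encodes only a \emph{joint} maximization over the single multiplier vector $y$, whereas the NE condition for \game requires each agent to individually best-respond against the non-smooth objective $J_i$. Bridging this gap relies on the observation that $\yopt$ is simultaneously optimal for every agent's inner maximization, precisely because $f_i$ is independent of $y$; this is what turns each pair $(\xopt_i,\yopt)$ into a per-agent saddle point and lets the minimax interchange recover the $\max$-operator that defines $J_i$.
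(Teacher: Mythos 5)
Your proof is correct, and it is genuinely more self-contained than the paper's. On existence the two coincide: both invoke \cite[Cor.~2.2.5]{FacchineiPang2009BOOK} on the nonempty compact convex set $\setX\times\Delta$ with $F$ continuous (one nuance: joint continuity of $F$ in $(x,y)$, including the dependence of $\nabla_{x_i}f_i$ and $\nabla_{x_i}\hat{g}$ on $x_{-i}$, rests on the twice-differentiability of Assumption~\ref{assum_eqNE_VIsol}(ii); part (i) only gives differentiability in $x_i$ for fixed $x_{-i}$). For the second claim the paper simply chains two citations --- \cite[Prop.~1.4.2]{FacchineiPang2009BOOK}, identifying VI solutions with NE of the augmented game $\gameaug$, and \cite[Thm.~1]{FacchineiEtAl2014}, asserting that the $x$-part of any NE of $\gameaug$ is a NE of $\game$ --- whereas you reprove the latter implication from first principles: block-wise restriction of the VI over the product set, sufficiency of the resulting first-order conditions because $f_i(\cdot,\xopt_{-i})+\hat{g}(\cdot,\xopt_{-i},\yopt)=\sum_{m}\yopt_m\bigl(f_i(\cdot,\xopt_{-i})+g(\cdot,\xopt_{-i},\theta_m)\bigr)$ is a convex combination of the convex functions of Assumption~\ref{assum_eqNE_VIsol}(i) (this is exactly where $y\in\Delta$ enters, and it correctly accommodates the paper's point that neither $f_i$ nor $g$ need be convex separately), followed by a per-agent saddle-point argument that recovers the nonsmooth $J_i$ via \eqref{eq_maxcont}. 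Your closing diagnosis is also the right one: the single multiplier $\yopt$ serves as every agent's inner maximizer precisely because $f_i$ is independent of $y$. One simplification is available: invoking a minimax theorem (convexity/affinity, compactness) is unnecessary, since the two one-sided optimality conditions already make $(\xopt_i,\yopt)$ a saddle point of $L_i$, and the elementary chain $\max_{y\in\Delta}L_i(x_i,y)\geq L_i(x_i,\yopt)\geq L_i(\xopt_i,\yopt)=\max_{y\in\Delta}L_i(\xopt_i,y)$ yields $\xopt_i\in\argmin_{x_i\in\setX_i}J_i(x_i,\xopt_{-i})$ with no structural hypotheses at all. In short: the paper's proof is three lines of citations; yours makes explicit which assumption each step consumes, at the cost of essentially reproducing the content of \cite[Thm.~1]{FacchineiEtAl2014}.
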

\begin{proof}
The existence of a solution for the VI$(F,\setX\times\Delta)$ is guaranteed by \cite[Cor.~2.2.5]{FacchineiPang2009BOOK} under Assumption~\ref{assum_eqNE_VIsol} and the compactness of $\Delta$. Denote such a solution by $\zopt$. 
A link between the solutions of the VI and those of the augmented game is established by \cite[Prop.~1.4.2]{FacchineiPang2009BOOK}: $\zopt = (\xopt,\yopt)$ is a solution of $\gameaug$ if and only if it solves VI$(F,\setX\times\Delta)$. The link with the original game $\game$ is provided by \cite[Thm.~1]{FacchineiEtAl2014}: for any NE $(\xopt,\yopt)$ of the game $\gameaug$, $\xopt$ is a NE of $\game$, which concludes the proof. 
\end{proof}
%
\subsection{Monotonicity of the augmented VI operator}
\label{sec_monotonF}
The development of algorithms for the solution of VI problems relies upon the monotonicity of the mapping $F$ in~\eqref{eq_NEVI}, which plays a role analogous to convexity in optimization~\cite{ScutariEtAl2014}.
\begin{defn}[Monotonicity]\label{defn_mono}
	A mapping $F\colon \setD \to \mathbb{R}^{m}$, with $\setD\subseteq\mathbb{R}^{m}$ closed and convex, is
	\begin{itemize}
		\item monotone on $\setD$ if $(u-v)\trasp(F(u)-F(v))\geq 0$ for all $u,v\in\setD$,
		\item strongly monotone on $\setD$ if there exists $c>0$ such that $(u-v)\trasp(F(u)-F(v))\geq c\|u-v\|^2$ for all $u,v\in\setD$.
	\end{itemize}
\end{defn}
\vspace{0.2cm}
The following result is instrumental in our analysis:
\begin{lem}\label{lem_Fmono}
	Let Assumptions~\ref{assum_eqNE_VIsol} hold. Then 
$F(x,y)$ in~\eqref{eq_F} is monotone on $\setX\times\Delta$.
\end{lem}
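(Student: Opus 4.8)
The plan is to show monotonicity of $F(x,y)$ in \eqref{eq_F} by decomposing it into pieces and exploiting the structure of $\hat g$ and Assumption~\ref{assum_eqNE_VIsol}. First I would split $F$ additively as $F(x,y) = F^f(x) + F^g(x,y)$, where
\begin{equation}\label{eq:split}
F^f(x) = \begin{bmatrix} (\nabla_{x_i} f_i(x))_{i\in\setN} \\ 0 \end{bmatrix}, \quad
F^g(x,y) = \begin{bmatrix} (\nabla_{x_i}\hat g(x,y))_{i\in\setN} \\ -(\nabla_{y_m}\hat g(x,y))_{m=1}^M \end{bmatrix}.
\end{equation}
Since monotonicity is additive, it suffices to show each summand is monotone; the zero padding of $F^f$ in the $y$-block clearly preserves the $\chi^f$-monotonicity inherited directly from Assumption~\ref{assum_eqNE_VIsol}(iii), so that $(z-z')\trasp(F^f(z)-F^f(z')) \geq \chi^f\|x-x'\|^2$.

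The heart of the argument is the operator $F^g$ built from $\hat g(x,y) = \sum_{m=1}^M y_m\, g(x,\theta_m)$, which is the pseudo-gradient of the $\min$--$\max$ saddle structure in \eqref{eq_pullout_ind}--\eqref{eq_pullout_coord}. I would compute its Jacobian block-wise. Writing $z=(x,y)$ and $z'=(x',y')$, the inner product $(z-z')\trasp(F^g(z)-F^g(z'))$ expands into an $x$-part and a $y$-part; I expect the cross terms to cancel by the skew-symmetry intrinsic to saddle-point operators. Concretely, for the $y$-block note that $\nabla_{y_m}\hat g(x,y) = g(x,\theta_m)$ depends only on $x$ (not on $y$), so the relevant $y$-contribution is $-\sum_m (y_m-y_m')\big(g(x,\theta_m)-g(x',\theta_m)\big)$, while the $x$-block contributes $\sum_m y_m \nabla_x g(x,\theta_m) - \sum_m y_m' \nabla_x g(x',\theta_m)$ paired with $(x-x')$. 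The anticipated cancellation is that the bilinear cross terms coupling $(y-y')$ with $g(x,\cdot)-g(x',\cdot)$ offset those arising in the $x$-block, reducing the whole expression to a convex combination $\sum_m y_m'(x-x')\trasp(\nabla_x g(x,\theta_m)-\nabla_x g(x',\theta_m))$ (or the symmetric variant with $y_m$), each term of which is bounded below by $\chi^g\|x-x'\|^2$ via Assumption~\ref{assum_eqNE_VIsol}(iii). Since the weights sum to one, the convex combination is also at least $\chi^g\|x-x'\|^2$.

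Combining the two bounds yields $(z-z')\trasp(F(z)-F(z')) \geq (\chi^f+\chi^g)\|x-x'\|^2 \geq 0$ by the final inequality in Assumption~\ref{assum_eqNE_VIsol}(iii), which establishes monotonicity on $\setX\times\Delta$. The main obstacle I foresee is handling the bilinear/saddle coupling cleanly: one must verify that the cross terms genuinely cancel rather than leaving an indefinite residual, and one must be careful that the leftover weighted sum is controlled by the scenario-wise monotonicity constant $\chi^g$ \emph{uniformly} in the simplex weights. The key facilitating observations are that $\hat g$ is linear in $y$ (so no $y$-curvature appears) and that the weights lie in $\Delta$, making the surviving term a genuine convex combination; together these let the per-scenario hypothesis on $\nabla_x g(\cdot,\theta)$ pass through to the aggregate without any additional assumption on how $g$ behaves across different $\theta_m$.
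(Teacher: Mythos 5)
Your route is genuinely different from the paper's: the paper proves the lemma with the second-order test of \cite[Prop.~2.3.2]{FacchineiPang2009BOOK}, observing that in the Jacobian $JF$ the lower-right block vanishes (linearity of $\hat{g}$ in $y$) and the off-diagonal blocks enter skew-symmetrically, so positive semidefiniteness of the symmetric part reduces to $J_xF^x = (\nabla^2_{x_ix_j}f_i(x))_{i,j\in\setN} + \nabla^2_{xx}\hat{g}(x,y) \succeq 0$, which follows by summing the two curvature bounds and using $\chi^f+\chi^g\geq 0$. Your first-order approach can be made to work, but the pivotal step as you state it fails: the cross terms do \emph{not} cancel algebraically, and the expression does not reduce to a convex combination of per-scenario monotone terms. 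Concretely, writing $g_m(\cdot)=g(\cdot,\theta_m)$ and $F^g$ for your saddle part, one has the exact identity
\begin{multline}
(z-z')\trasp\big(F^g(z)-F^g(z')\big) = \sum_{m=1}^M y_m'\,(x-x')\trasp\big(\nabla_x g_m(x)-\nabla_x g_m(x')\big) \\
 + \sum_{m=1}^M (y_m-y_m')\big[(x-x')\trasp\nabla_x g_m(x) - g_m(x) + g_m(x')\big],
\end{multline}
and the second sum is a residual of indefinite sign: each bracket is a Bregman-type quantity that is nonzero in general (zero only for quadratic $g_m$, up to the weak-convexity offset), weighted by the sign-indefinite differences $y_m-y_m'$. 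You flagged exactly this as the open obstacle, and it is precisely where the sketch has a gap --- no rearrangement of gradient terms alone closes it.

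The repair is the classical convex--concave saddle argument (adapted to weak convexity), which bounds the whole expression through \emph{function values} of $\hat{g}$ rather than attempting cancellation of gradients. Since $\hat{g}(\cdot,y)-\tfrac{\chi^g}{2}\|\cdot\|^2$ is, for $y\in\Delta$, a convex combination of the convex functions $g_m-\tfrac{\chi^g}{2}\|\cdot\|^2$ (this is your correct simplex observation, and it needs only continuous differentiability of $g$ in $x$, obtained by integrating \eqref{eq_assum2} along the segment), one has $(x-x')\trasp\nabla_x\hat{g}(x,y)\geq \hat{g}(x,y)-\hat{g}(x',y)+\tfrac{\chi^g}{2}\|x-x'\|^2$ and $-(x-x')\trasp\nabla_x\hat{g}(x',y')\geq \hat{g}(x',y')-\hat{g}(x,y')+\tfrac{\chi^g}{2}\|x-x'\|^2$, while linearity in $y$ gives the exact identities $-(y-y')\trasp\nabla_y\hat{g}(x,y)=\hat{g}(x,y')-\hat{g}(x,y)$ and $(y-y')\trasp\nabla_y\hat{g}(x',y')=\hat{g}(x',y)-\hat{g}(x',y')$. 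Summing these four relations, the eight function values telescope to zero and yield $(z-z')\trasp(F^g(z)-F^g(z'))\geq \chi^g\|x-x'\|^2$; combined with your (correct) bound $\chi^f\|x-x'\|^2$ for the $f$-part, monotonicity follows from $\chi^f+\chi^g\geq 0$ (note the lower bound is in $\|x-x'\|^2$, not $\|z-z'\|^2$, which suffices here). With this fix your argument is a valid first-order alternative to the paper's Jacobian test, and it even dispenses with the twice-differentiability in Assumption~\ref{assum_eqNE_VIsol}(ii) that the paper's proof uses.
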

\begin{proof}
	By Assumption~\ref{assum_eqNE_VIsol}(ii), $F(x,y)$ is continuously differentiable on its domain.
	Let $F^x$ and $F^y$ denote the first $nM$ and the last $M$ rows of $F$, respectively, i.e., $F^{x}(x,y) = \left(\nabla_{x_i}f_i(x) + \nabla_{x_i}g(x,y)\right)_{i\in\setN}$, and $F^{y}(x,y) = \left(\nabla_{y_m}g(x,y)\right)_{m=1}^M$. 
	By definition of the Jacobian we have
	\begin{multline}\label{eq_Jacobbe}
	J F(x,y) = \left[
	\begin{array}{cc}
	J_{x}F^{x}(x,y) & J_{y}F^{x}(x,y) \\
	-J_{x}F^{y}(x,y) & -J_{y}F^{y}(x,y)
	\end{array}\right] 
	\end{multline}
	where $J_x F^x(x,y) = (\nabla^2_{x_i x_j} f_i(x))_{i,j\in\setN} + \nabla^2_{xx} \hat{g}(x,y)$, $J_y F^x = (J_x F^y(x,y)) \trasp = (\nabla^2_{x_i y} (f_i(x) + \hat{g}(x,y)))_{i\in\setN}$, and $J_{y}F^{y}(x,y) = 0$; notice that $(\nabla^2_{x_i x_j} f_i(x))_{i,j \in \setN} $ is a matrix with $\nabla^2_{x_i x_j} f_i(x)$ being its $(i,j)$-th entry.\\
	Due to the particular block structure in \eqref{eq_Jacobbe},  $J F(x,y) \succeq 0$ if and only if $J_{x}F^{x}(x,y) \succeq 0$.
	To show this, note that by from \eqref{eq_assum2}, for all $(x,y)\in\setX\times\Delta$ and all $\nu\in\mathbb{R}^{nN}$,
	\begin{equation}
	\begin{split}
	\nu\trasp (\nabla^2_{x_i x_j} f_i(x))_{i,j \in \setN}~ \nu & \geq \chi^f \|\nu\|^2,\\
	\nu\trasp \nabla^2_{x x} \hat{g}(x,y) \nu & \geq \chi^g \|\nu\|^2. 
	\end{split}
	\end{equation}
	Summing the above inequalities yields 
	\begin{multline}\label{eq_psd3}
	\nu\trasp \left((\nabla^2_{x_i x_j} f_i(x))_{i,j \in \setN} + \nabla^2_{xx} \hat{g}(x,y) \right ) \nu \\
	= \nu\trasp J_{x}F^{x}(x,y) \nu \geq (\chi^f + \chi^g) \|\nu\|^2,\; \forall \nu\in\mathbb{R}^{nN},
	\end{multline}
	which, since $\chi^f + \chi^g \geq 0$, corresponds to $J_{x}F^{x}(x,y) \succeq 0$. 
	The statement then follows directly from~\cite[Prop.~2.3.2]{FacchineiPang2009BOOK}, thus concluding the proof.
\end{proof}
A direct consequence of the monotonicity of $F$ is that by \cite[Thm.~41]{ScutariEtAl2014}, VI$(F,\setX\times\Delta)$ may admit multiple solutions: this fact together with \cite[Prop.~1.4.2]{FacchineiPang2009BOOK} --- stating the correspondence between the solutions of the  VI$(F,\setX\times\Delta)$ and the NE of $\gameaug$ --- implies that the game $\gameaug$ can admit multiple NE.

\subsection{Decentralized algorithm for monotone VI and equilibrium selection}
\label{sec_decmapping}
Specific algorithmic design is needed to tackle the convergence properties of a monotone mapping; we refer the reader to \cite{KannanShanbhag2012SIAM,ScutariEtAl2014,YiPavel2019,BelgioiosoGrammatico2020} for a deep discussion on this topic. For our scope, proximal algorithms can be used to retrieve a solution of a monotone VI by solving a particular sequence of strongly monotone problems, derived by regularizing the original problem.
To construct a decentralized mapping $\Phi$ that is single-valued, as required by Proposition \ref{prop_decen}, a \emph{tie-break} rule needs to be put in place to single a particular NE out of the possibly many (see Section \ref{sec_monotonF}).
Such a tie-break rule is needed even if only one NE is returned by the given algorithm, to prevent the case where different initial conditions produce different NE.

We address the above by employing a proximal algorithm based on \cite[Algorithm~4]{ScutariEtAl2014}, which allows us to select the minimum Euclidean norm NE; the choice of the Euclidean norm is not restrictive, and a wide range of strictly convex objective function can be used as a selector instead (see \cite[Thm.~21]{ScutariEtAl2014}). Thus, more formally, we consider the following refinement of \eqref{eq_NEVI}
\begin{subequations}\label{eq_NEVIsel}
	\begin{align}
	\zopt =  \;  & {}\argmin_{\zopt \in\setX\times\Delta} \frac{1}{2} \|\zopt\|^2 \\
	& {} \mathrm{subject~ to} \;  (z-\zopt)\trasp F(\zopt) \geq 0,\; \forall z\in\setX\times\Delta.\label{eq_NEVIsel_b}
	\end{align}
\end{subequations}

We link the VI problem in \eqref{eq_NEVIsel} to the regularized game $\gameaug^{\tau,\bar{z}}$, where $\tau \in \mathbb{R}_+$ and $\bar{z}=(\bar{x},\bar{y})$ are the designated step size and centre of regularization, respectively.
Given the tuple $(\xmenoi,y,\bar{x}_i)$, each player $i\in\setN$ solves the following problem
\begin{multline}\label{eq_pullout_ind_tau}
	x_i = \argmin_{\nu_i\in\setX_i} f_i(\nu_i,x_{-i}) + \hat{g}(\nu_i,x_{-i},y)\\
					+ \frac{\eta}{2}\|(\nu_i,\xmenoi,y)\|^2 + \frac{\tau}{2}\|\nu_i-\bar{x}_i\|^2,
\end{multline}
while the additional agent (player $N+1$), given $(x,\bar{y})$, solves
\begin{equation}\label{eq_pullout_coord_tau}
	y = \argmax_{\nu\in\Delta} \hat{g}(x,\nu) - \frac{\eta}{2} \|(x,\nu)\|^2 - \frac{\tau}{2}\|\nu-\bar{y}\|^2,
\end{equation}
with $\eta\in\mathbb{R}_+$. 
Note that Assumption~\ref{assum_eqNE_VIsol} still holds for~\eqref{eq_pullout_ind_tau}--\eqref{eq_pullout_coord_tau}.
By taking the pseudo-gradient of the above as in~\eqref{eq_F}, we have from \cite[Prop.~1.4.2]{FacchineiPang2009BOOK} that $\zopt = (\xopt,\yopt)$ is a NE of $\gameaug^{\tau,\bar{z}}$ if and only if it satisfies the VI
\begin{equation}\label{eq_NEVItau}
	(z-\zopt)\trasp (F(\zopt) + \eta \zopt + \tau (\zopt-\bar{z})) \geq 0,\; \forall z,\bar{z}\in\setX\times\Delta.
\end{equation}
The next lemma shows that the regularized game admits a unique NE.
\begin{lem}\label{lem_strongmonotontau}
	Consider Assumptions~\ref{assum_eqNE_VIsol}. Let $F$ be as in~\eqref{eq_F}, and fix $\eta\geq 0$. 
	Then, for any $\tau>0$ and $\bar{z}\in\setX\times\Delta$, the regularized game $\gameaug^{\tau,\bar{z}}$ defined by~\eqref{eq_pullout_ind_tau}--\eqref{eq_pullout_coord_tau} admits a unique NE.
\end{lem}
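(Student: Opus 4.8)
The plan is to show that the variational inequality \eqref{eq_NEVItau}, which by \cite[Prop.~1.4.2]{FacchineiPang2009BOOK} characterizes the NE of $\gameaug^{\tau,\bar{z}}$, is governed by a \emph{strongly} monotone operator, and then to invoke the standard existence-and-uniqueness result for strongly monotone VIs. Since the quoted equivalence identifies the NE of $\gameaug^{\tau,\bar{z}}$ bijectively with the solutions of \eqref{eq_NEVItau}, uniqueness of the VI solution translates directly into uniqueness of the equilibrium.

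Concretely, I would first collect the operator appearing in \eqref{eq_NEVItau} into $F_{\tau,\bar{z}}(z) := F(z) + \eta z + \tau(z - \bar{z})$, defined on $\setX\times\Delta$. This set is nonempty, compact and convex, being the product of the compact convex sets $\setX_i$ (Assumption~\ref{assum_eqNE_VIsol}(i)) with the simplex $\Delta$; moreover $F_{\tau,\bar{z}}$ is continuous, since $F$ is continuously differentiable by Assumption~\ref{assum_eqNE_VIsol}(ii) and the added terms are affine in $z$.

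The key step is the strong monotonicity estimate. For any $u,v\in\setX\times\Delta$,
\begin{align}\label{eq_strongmono_est}
(u-v)\trasp\big(F_{\tau,\bar{z}}(u) - F_{\tau,\bar{z}}(v)\big)
&= (u-v)\trasp\big(F(u)-F(v)\big) \nonumber\\
&\quad + (\eta+\tau)\,\|u-v\|^2,
\end{align}
where the affine part $\eta z + \tau(z-\bar{z})$ contributes exactly $(\eta+\tau)\|u-v\|^2$ (the constant shift $-\tau\bar{z}$ cancels in the difference). By Lemma~\ref{lem_Fmono} the first term on the right-hand side of \eqref{eq_strongmono_est} is nonnegative, whence $(u-v)\trasp(F_{\tau,\bar{z}}(u)-F_{\tau,\bar{z}}(v)) \geq (\eta+\tau)\|u-v\|^2 \geq \tau\|u-v\|^2$. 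As $\tau>0$ and $\eta\geq 0$, the operator $F_{\tau,\bar{z}}$ is strongly monotone on $\setX\times\Delta$ with modulus $c = \eta+\tau>0$.

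Finally, I would conclude by invoking the standard result for strongly monotone variational inequalities, e.g. \cite[Thm.~2.3.3]{FacchineiPang2009BOOK}: a VI with a continuous, strongly monotone operator over a nonempty closed convex set admits a unique solution $\zopt = (\xopt,\yopt)$. By the equivalence recalled above, $\gameaug^{\tau,\bar{z}}$ therefore admits a unique NE, which completes the argument. The proof is essentially routine once the regularization is recognized; the only point requiring care is that $F$ itself is merely monotone (Lemma~\ref{lem_Fmono}) and not strongly so, so that the strong monotonicity — and hence uniqueness — is supplied entirely by the proximal/regularizing term, making the strict positivity of $\tau$ the indispensable ingredient even when $\eta = 0$.
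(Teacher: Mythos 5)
Your proposal is correct and takes essentially the same route as the paper: both establish strong monotonicity of the regularized operator $F(z)+\eta z+\tau(z-\bar{z})$ on $\setX\times\Delta$ by combining the monotonicity of $F$ from Lemma~\ref{lem_Fmono} with the $(\eta+\tau)\|u-v\|^2$ contribution of the affine regularization terms, and then invoke the standard uniqueness result for strongly monotone VIs together with the equivalence of \cite[Prop.~1.4.2]{FacchineiPang2009BOOK}. The only differences are cosmetic: the paper cites \cite[Thm.~41]{ScutariEtAl2014} where you cite \cite[Thm.~2.3.3]{FacchineiPang2009BOOK}, and you make the existence ingredients (compactness of the feasible set and continuity of the operator) explicit where the paper leaves them implicit.
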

\begin{proof}
To establish uniqueness of the NE it suffices to show that $F^{\tau,\bar{z}}$ is strongly monotone \cite[Thm.~41]{ScutariEtAl2014}.
	Fix any $u,v \in \mathbb{R}^{nN+M}$. Let $F^{\tau,\bar{z}}(u) = F(u) +\eta u +\tau(u-\bar{z})$, and define $F^{\tau,\bar{z}}(v)$ similalrly. We have 
	\begin{align}
	(u-v)&\trasp(F^{\tau,\bar{z}}(u)-F^{\tau,\bar{z}}(v)) \nonumber \\
	&= (u-v)\trasp ( F(u) - F(v) ) + (\eta+\tau) \| u-v\|^2 \nonumber \\
	&\geq \tau \| u-v\|^2, \label{eq:strong_mon}
	\end{align}
	where the inequality follows from the fact that $\eta \in \mathbb{R}_+$, and $(u-v)\trasp ( F(u) - F(v) ) \geq 0$ since $F$ is monotone due to Lemma \ref{lem_Fmono}. 
	By Definition \ref{defn_mono}, \eqref{eq:strong_mon} implies that $F^{\tau,\bar{z}}$ is strongly monotone, thus concluding the proof. \end{proof}

Now let $S^{\tau}(\cdot)$ denote the solution of the VI$(F^{\tau,\cdot},\setX\times\Delta)$.
Building on Lemma~\ref{lem_strongmonotontau}, we aim at determining a NE of $\game$ by updating the centre of regularization of $\gameaug^{\tau,\cdot}$ on the basis of an iterative method in the form $\bar{z}^{(k+1)} = S^{\tau}(\bar{z}^{(k)})$, until convergence to the fixed point $\zopt = S^{\tau}(\zopt)$. The latter corresponds to the (unique under Lemma \ref{lem_strongmonotontau}) NE of $\gameaug^{\tau,\zopt}$, which satisfies~\eqref{eq_NEVIsel}. Algorithm~\ref{alg_iterprox} provides the means to establish such a connection; this is formalised in the following proposition.
\begin{algorithm}[t]
	\caption{Decentralized NE seeking algorithm}
	\label{alg_iterprox}
	\begin{algorithmic}[1]
		\small
		\Require $\bar{z}^{(0)}\in\setX\times\Delta$, $\tau>0$, $\gamma_{\mathrm{inn}}>0$, $\gamma_{\mathrm{out}}>0$
		\vspace{0.1cm}
		\State $k \gets 0$
		\Repeat ~(outer loop)
		\vspace{0.1cm}
		\State $l \gets 0$
		\Repeat ~(inner loop)
		\For{$i = 1,\ldots,N$}
		\vspace{0.1cm}
		\State $ x_i^{(l+1)} \gets \argmin\limits_{{\nu_i\in\setX_i}} f_i(\nu_i,x_{-i}^{(l)}) + \hat{g}(\nu_i,x_{-i}^{(l)},y^{(l)})$ \par 
		\hspace{6em} $ {} + \frac{\eta^{(k)}}{2}\|(\nu_i,x_{-i}^{(l)},y^{(l)})\|^2 + \frac{\tau}{2} \|\nu_i - \bar{x}_i^{(k)}\|^2$ 
		\EndFor
		\vspace{0.15cm}
		\State $y_i^{(l+1)} \gets \argmax\limits_{{\nu\in\Delta}} \hat{g}(x^{(l)},\nu)$ \par
		 \hspace{7em} $ {} - \frac{\eta^{(k)}}{2}\|(x^{(l)},\nu)\|^2 - \frac{\tau}{2} \|\nu - \bar{y}^{(k)}\|^2$
		 \vspace{0.1cm}
		\State $l \gets l+1$
		\vspace{0.1cm}
		\Until{$\|{z}^{(l)}-{z}^{(l-1)}\|\leq \gamma_{\mathrm{inn}}$}
		\vspace{0.1cm}
		\State $\bar{z}^{(k+1)}\gets z^{(l)}$
		\vspace{0.1cm}
		\State $k \gets k+1$
		\vspace{0.1cm}
		\Until{$\|\bar{z}^{(k)}-\bar{z}^{(k-1)}\|\leq \gamma_{\mathrm{out}}$} %
	\end{algorithmic}
\end{algorithm}

\begin{prop}[Thm.~21 \cite{ScutariEtAl2014}]\label{lem_convalg}
	Let Assumptions~\ref{assum_eqNE_VIsol} hold. 
	Let $\{\eta^{(k)}\}_{k=0}^{\infty}$ be any sequence satisfying $\eta^{(k)}>0$ for all $k$, $\sum_{k=0}^{\infty} \eta^{(k)} = \infty$, and $\lim_{k\rightarrow\infty} \eta^{(k)} = 0$. Select a big enough $\bar{\tau}>0$ and let $\{\bar{z}^{(k)}\}_{k=0}^{\infty}$ denote the sequence generated by Algorithm \ref{alg_iterprox}.
	For any $\tau\geq\bar{\tau}$, (i) there exists $\gamma_{\mathrm{inn}},\gamma_{\mathrm{out}}>0$ such that $\{\bar{z}^{(k)}\}_{k=0}^{\infty}$ is bounded; (ii) there exists $\zopt = (\xopt,\yopt)$ such that $\|\bar{z}^{(k)} - \zopt\| \rightarrow 0$ for $k\rightarrow\infty$, where $\zopt$ is a solution of \eqref{eq_NEVIsel}, and (iii) $\xopt \in \Omega$. 
\end{prop}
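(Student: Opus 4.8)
The plan is to recognize Proposition~\ref{lem_convalg} as an instance of \cite[Thm.~21]{ScutariEtAl2014} applied to the augmented VI$(F,\setX\times\Delta)$, so that the substantive work reduces to verifying that the theorem's hypotheses hold in our setting and then transferring its conclusion back to the original game $\game$.

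First I would check the structural requirements demanded by the cited theorem. The domain $\setX\times\Delta$ is nonempty, compact and convex --- $\setX$ by Assumption~\ref{assum_eqNE_VIsol}(i) and $\Delta$ as the standard simplex. The operator $F$ in~\eqref{eq_F} is continuous, indeed continuously differentiable, on this domain by Assumption~\ref{assum_eqNE_VIsol}(ii), and it is monotone by Lemma~\ref{lem_Fmono}. The prescribed sequence $\{\eta^{(k)}\}$ satisfies the standard Tikhonov conditions $\eta^{(k)}>0$, $\sum_k\eta^{(k)}=\infty$, $\eta^{(k)}\to 0$, which are precisely what the inner regularization loop requires. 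With these in place, parts (i) and (ii) follow directly: for $\tau$ above the threshold $\bar{\tau}$, the outer proximal update $\bar{z}^{(k+1)}=S^{\tau}(\bar{z}^{(k)})$ is a well-defined single-valued map --- Lemma~\ref{lem_strongmonotontau} guarantees uniqueness of the regularized NE, hence of $S^{\tau}(\cdot)$ --- while the inner Tikhonov loop converges to $S^{\tau}(\bar{z}^{(k)})$. Boundedness in (i) is in fact immediate, since every $\bar{z}^{(k)}\in\setX\times\Delta$, a compact set; the role of $\gamma_{\mathrm{inn}},\gamma_{\mathrm{out}}$ is to render the scheme well-posed and convergent. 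One obtains convergence $\|\bar{z}^{(k)}-\zopt\|\to 0$ to a fixed point $\zopt=S^{\tau}(\zopt)$ that solves the selection VI~\eqref{eq_NEVIsel}.

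Next, for part (iii) I would transfer this back to $\game$. Since $\zopt=(\xopt,\yopt)$ solves~\eqref{eq_NEVIsel}, it in particular satisfies the variational constraint~\eqref{eq_NEVIsel_b}, and is therefore a solution of VI$(F,\setX\times\Delta)$. Proposition~\ref{prop_eqNE_VIsol} then yields that $\xopt$ is a NE of $\game$, that is $\xopt\in\Omega$, which closes the argument.

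The main obstacle lies not in our reduction but in the convergence machinery that \cite[Thm.~21]{ScutariEtAl2014} supplies: showing that for $\tau$ large enough the proximal map $S^{\tau}$ is a contraction on $\setX\times\Delta$, so the outer loop converges, while simultaneously controlling the inexactness introduced by terminating the inner Tikhonov loop at tolerance $\gamma_{\mathrm{inn}}$. I would therefore take care to argue that $\gamma_{\mathrm{inn}},\gamma_{\mathrm{out}}$ can be chosen small enough that the accumulated inner-loop errors remain summable, so that the resulting inexact proximal iteration still converges to a genuine fixed point of $S^{\tau}$ rather than merely to an approximate one. This is exactly the delicate point handled by the cited theorem, and our role is to confirm that monotonicity of $F$ (Lemma~\ref{lem_Fmono}) is the only nontrivial ingredient its hypotheses require.
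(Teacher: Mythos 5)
Your proposal is correct and follows essentially the same route as the paper: the result is stated there as a direct instance of \cite[Thm.~21]{ScutariEtAl2014}, with the hypotheses discharged exactly as you do --- monotonicity of $F$ via Lemma~\ref{lem_Fmono}, well-posedness of the proximal map $S^{\tau}$ via Lemma~\ref{lem_strongmonotontau}, compactness of $\setX\times\Delta$, and the Tikhonov conditions on $\{\eta^{(k)}\}$ --- and with part (iii) obtained by passing through \eqref{eq_NEVIsel_b} and Proposition~\ref{prop_eqNE_VIsol}. Your closing remark correctly identifies that the inexact-proximal convergence machinery (choice of $\bar{\tau}$, tolerances $\gamma_{\mathrm{inn}},\gamma_{\mathrm{out}}$) is delegated entirely to the cited theorem, which is precisely what the paper does, pointing to \cite[Lemma~20]{ScutariEtAl2014} for the lower bound on $\bar{\tau}$.
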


The reader is referred to \cite[Lemma.~20]{ScutariEtAl2014} for a lower bound on $\bar{\tau}$ in Algorithm~\ref{alg_iterprox}. The latter asymptotically converges to a solution of \eqref{eq_NEVIsel}, while by Proposition~\ref{prop_eqNE_VIsol}, \eqref{eq_NEVIsel_b} it is equivalent to the game $\gameaug$, whose solution set is nonempty and is also contained in $\Omega$ due to the second part of Proposition~\ref{prop_eqNE_VIsol}.

\emph{Proof of Proposition \ref{prop_decen}:}
Algorithm~\ref{alg_iterprox} and its analysis, leading to Proposition~\ref{lem_convalg}, serves as an implicit construction of a decentralized, single-valued mapping $\Phi\colon \Theta^M \to \Omega$, thus establishing Proposition \ref{prop_decen}. \hfill $\blacksquare$

Note that $\Phi$ is single-valued, hence the returned solution is independent of the initial condition used in Algorithm~\ref{alg_iterprox}. However, in the proof of Theorem \ref{thm_apriori}
it becomes insightful to make this dependency explicit. Thus, for the analysis of Section \ref{sec_thmproof_new} we will introduce the notation $\Phi_{x_0}$, with $x_0 = \bar{x}^{(0)} \in \setX$. 
(Notice that $\bar{y}^{(0)}$, which also appears in the initial condition $\bar{z}^{(0)}$ of Algorithm~\ref{alg_iterprox}, depends on the $M$-multisample; as the latter is already an argument of $\Phi$, we only include $x_0$ as a subscript.)

\section{Proofs of \emph{a posteriori} and \emph{a priori} certificates}
\label{sec_thmproof}
\subsection{Proof of Theorem~\ref{thm_apost}}
\label{sec_thmproof_apost}
Fix any $M\in \mathbb{N}$. Consider $(\theta_1,\ldots,\theta_M)\in\Theta^M$, and let $d^*\leq M$ be the cardinality of any given compression set of $\{\theta_1,\ldots,\theta_M\}$ (recall that it depends on the observation of the $M$-multisample).
Let $\xopt = \Phi(\theta_1,\ldots,\theta_M) \in \Omega$, and
	\begin{equation}
		\gammaopt = \max_{m \in \{1,\ldots,M\}} g(\xopt,\theta_m).
	\end{equation}
For any $\theta\in\Theta$, consider the set
\begin{equation}
	H_{\theta} = \{(x,\gamma)\colon g(x,\theta)\leq \gamma \}.
\end{equation}

Fix $\beta \in (0,1)$ and consider $\varepsilon(\cdot)$ defined as in \eqref{eq_eps}. Under Assumptions \ref{assum_eqNE_VIsol}, $\Phi$ is single-valued by Proposition \ref{prop_decen}. By \cite[Thm.~1]{CampiEtAl2018TAC} we then have that
	\begin{multline}\label{eq_coroll2}
	\mathds{P}^M\big\{(\theta_1,\ldots,\theta_M) \in \Theta^M:\\
	\mathds{P}\{\theta\in\Theta\colon (\xopt,\gammaopt)\notin H_{\theta} \}  \leq \varepsilon(d^*) \big\}\geq 1-\beta,
	\end{multline}
	if the following consistency condition holds for $H_{\theta}$ (see \cite{MargellosEtAl2015TAC} for a definition)
	\begin{equation} \label{eq:consistency}
	(\xopt,\gammaopt)\in H_{\theta_m},\; \forall m\in\{1,\ldots,M\}.
	\end{equation}
To show the latter, notice that for each $i\in\setN$, by the NE definition (Definition \ref{defn_NE}), $(\xopt_i,\gammaopt)$ will belong to the set of minimizers of the following epigraphic reformulation of \eqref{eq_defNE}
	\begin{subequations}\label{eq_epigame}
		\begin{align}
		(\xopt_i,\gammaopt)\in & \argmin_{x_i \in \setX_i,\gamma \in \mathbb{R}} \, f_i(x_i,\xmenoi^{\ast}) + \gamma \\
		\mathrm{subject~ to~} &g(x_i,\xmenoi^{\ast},\theta_m)\leq \gamma, \; \forall m \in\{1,\cdots,M\}.\label{eq_constr_max}
		\end{align}
	\end{subequations}
	By \eqref{eq_constr_max} it follows then that \eqref{eq:consistency} is satisfied, thus establishing \eqref{eq_coroll2}.
Note that for the result of \cite{CampiEtAl2018TAC} to be invoked, \eqref{eq_epigame} is not required to be a convex optimization program, hence the fact that for each $i\in \setN$, for any $\theta \in \Theta$, only $f_i(\cdot,\xopt_{-i})+g(\cdot,\xopt_{-i},\theta)$ is assumed to be convex by Assumption~\ref{assum_eqNE_VIsol} is sufficient.

	By the definition of $\gammaopt$ and $H_{\theta}$, \eqref{eq_coroll2} implies that with confidence at least $1-\beta$, $\mathds{P}\{\theta\in\Theta\colon g(\xopt,\theta) > \max_{m \in \{1,\ldots,M\}} g(\xopt,\theta_m) \} \leq \varepsilon(d^*)$, thus establishing the first part of Corollary \ref{cor:cost_deter}.
	
	We now proceed to demonstrate the claim in \eqref{eq_resthm}. 
	Recall that, by \eqref{eq_F}, \eqref{eq_NEVIsel} and Proposition~\ref{prop_eqNE_VIsol}, we can obtain $\xopt\in\Omega$ as solution of the following optimization program (note the slight abuse of notation as by $(\xopt,\yopt)$ we denote both the optimizer and the corresponding decision vector)
	\begin{subequations}\label{eq_selNE2}
		\begin{align}
		& \min_{(\xopt,\yopt) \in \setX\times\Delta}  \; \frac{1}{2} \|(\xopt,\yopt)\|^2  \\
		& \mathrm{subject~ to} \nonumber \\
		& \sum_{i\in\setN}(x_i-\xopt_i)\trasp \nabla_{x_i}(f_i(\xopt)+\hat{g}(\xopt,\yopt)) \nonumber \\
		- &\sum_{m=1}^M (y_m-\yopt_m) \nabla_{y_m} \hat{g}(\xopt,\yopt) \geq 0,\,
		\forall x\in\setX,\,y\in\Delta,\label{eq_constrScn2}
		\end{align}	
	\end{subequations}
	where $(\xopt,\yopt)$ is a NE of $\gameaug$. By definition of $\hat{g}$ in~\eqref{eq_pullout_ind}, and recalling $\nabla_{y_m} (\sum_{m=1}^M \yopt_m g(\xopt,\theta_m) ) = g(\xopt,\theta_m)$, \eqref{eq_constrScn2} can be equivalently written as
	\begin{multline}\label{eq_constrScn2bis}
	\sum_{i\in\setN} (x_i-\xopt_i)\trasp \nabla_{x_i}(f_i(\xopt)+\sum_{m=1}^M \yopt_m g(\xopt,\theta_m)) \\
	+ \sum_{m=1}^M \yopt_m g(\xopt,\theta_m) - \sum_{m=1}^M y_m  g(\xopt,\theta_m) \geq 0,\\ \forall x\in\setX,\,y\in\Delta.
	\end{multline}
	As \eqref{eq_constrScn2bis} holds for all $y\in \Delta$, we have that \eqref{eq_constrScn2} is equivalent to the following inequality being satisfied for all $x \in \mathcal{X}$, 
	\begin{equation}\label{eq_constrScn3bis}
	\begin{split}
	\sum_{i\in\setN} (x_i-{}&\xopt_i)\trasp \nabla_{x_i}(f_i(\xopt)+\sum_{m=1}^M \yopt_m g(\xopt,\theta_m)) \\
	& + \sum_{m=1}^M \yopt_m g(\xopt,\theta_m) - \max_{y\in\Delta}\sum_{m=1}^M y_m  g(\xopt,\theta_m) \\
	= \sum_{i\in\setN} (x_i{}&-\xopt_i)\trasp \nabla_{x_i}(f_i(\xopt)+\sum_{m=1}^M \yopt_m g(\xopt,\theta_m)) \\
	& + \sum_{m=1}^M \yopt_m  g(\xopt,\theta_m) 
	- \max_{m\in\{1,\ldots,M\}}g(\xopt,\theta_m)\geq 0,
	\end{split}
	\end{equation}
	where the equality follows from \eqref{eq_maxcont}.
	For a given $\theta\in\Theta$, recall from Section~\ref{sec_thmstate} the definition of the game $\game^+$ associated with the samples $\{\theta_1,\ldots,\theta_M\}\cup \{\theta\}$, and the associated set of NE $\Omega^+$. Moreover, let $\gameaug^+$ denote the associated augmented game. Analogously to \eqref{eq_constrScn3bis}, any solution $(x^+,y^+)\in\setX\times\Delta^+$, where $\Delta^+$ is the simplex in $\mathbb{R}^{M+1}$, of the augmented game $\gameaug^+$ will satisfy the following VI:
	\begin{multline}\label{eq:eq_constrScn_p}
		\sum_{i\in\setN} (x_i -x_i^+)\trasp \nabla_{x_i}\big(f_i(x^+)+\sum_{m=1}^{M} y_m^+ g(x^+,\theta_m)\big)\\
		 + \sum_{i\in\setN} (x_i -x_i^+)\trasp \nabla_{x_i}\big(y_{M+1}^+ g(x^+,\theta)\big) \\
		+ \sum_{m=1}^{M} y_m^+  g(x^+,\theta_m) + y_{M+1}^+ g(x^+,\theta)\\ 
		- \max\big\{\max_{m\in\{1,\ldots,M\}}g(x^+,\theta_m),\, g(x^+,\theta)\big\}\geq 0.
	\end{multline}
	Note the analogy between \eqref{eq:eq_constrScn_p} and \eqref{eq_constrScn3bis}, with the additional terms corresponding to the new sample $\theta$ ($y_{M+1}$ is the additional decision variable corresponding to the new sample).
	
	We are interested in quantifying the probability of $\xopt \in \Omega^+$. To this end, notice that if $g(\xopt,\theta) \leq \gammaopt$, then
	$x^+ = \xopt$ and $y^+ = (y^{\ast\traspo},\,0)\trasp$ constitute a feasible pair for \eqref{eq:eq_constrScn_p}. This is due to the fact that under this choice $y_{M+1}^+ = 0$ and hence
	\begin{multline}
		\max\Big\{\max_{m\in\{1,\ldots,M\}}g(\xopt,\theta_m),\, g(\xopt,\theta)\Big\} \\
		= \max\big\{\gammaopt,\, g(\xopt,\theta)\big\}  = \gammaopt = \max_{m\in\{1,\ldots,M\}}g(\xopt,\theta_m),\nonumber
	\end{multline}
	thus
\eqref{eq:eq_constrScn_p} reduces to \eqref{eq_constrScn3bis}. Applying Proposition~\ref{prop_eqNE_VIsol} to $\game^+$ and $\gameaug^+$, we have that if $(x^+,y^+)$ satisfies \eqref{eq:eq_constrScn_p} (i.e., it is a NE of the augmented game $\gameaug^+$) then $x^+ \in\Omega^+$. 	
Therefore, $\xopt \in \Omega^+$ whenever $g(\xopt,\theta)\leq \gammaopt$, or in other words
	\begin{multline}\label{eq_probbound}
		\mathds{P}\{\theta\in\Theta\colon (\xopt,\gammaopt)\in H_{\theta} \}  = \mathds{P}\{\theta\in\Theta\colon g(\xopt,\theta) \leq \gammaopt \} \\
		  \leq \mathds{P}\{\theta\in\Theta\colon \xopt \in \Omega^+ \}.
	\end{multline}
	By \eqref{eq_coroll2} and \eqref{eq_probbound}, \eqref{eq_resthm} follows, thus concluding the proof. \hfill $\blacksquare$

\subsection{Proof of Theorem~\ref{thm_apriori}}
\label{sec_thmproof_new}
Let $\setC_0 \subseteq \{\theta_1,\ldots,\theta_M\}$ be the minimal cardinality compression set for the minimum norm NE $\xopt$ returned by Algorithm \ref{alg_iterprox}; note that under the non-degeneracy assumption it will be unique and it will coincide with the set of support samples. 
Following the discussion at the end of Section~\ref{sec_extgame}
$\Phi_{x_0}$ denotes a mapping that returns $\xopt$ (e.g., the one induced by Algorithm \ref{alg_iterprox}), where we make explicit the dependence on the initial condition $x_0 \in \setX$.
As $\Phi$ is single-valued, by definition of a compression set we have that $\xopt = \Phi_{x_0}(\theta_1,\ldots,\theta_M) = \Phi_{x_0}(\setC_0)$, for all $x_0 \in \setX$. 

Consider now the following optimization program.
	\begin{subequations}\label{eq_epigame_sum}
		\begin{align}
		(\xopt_i, \gammaopt_i)_{i \in \setN} = & \argmin_{\{x_i \in \setX_i, \gamma_i \in \mathbb{R}\}_{i \in \setN}} \, \sum_{i \in \setN} \big ( \gamma_i + \tau \|x_i - \xopt_i\|^2 \big ) \label{eq:obj_sum}\\
		\mathrm{subject~ to~ } 
		& f_i(x_i,\xmenoi^{\ast}) +  g(x_i,\xmenoi^{\ast},\theta_m)\leq \gamma_i, \nonumber \\ 
		&~~~~~~~~\forall i\in \setN,~ \forall m \in\{1,\cdots,M\},
		\end{align}
	\end{subequations}
	where $\gamma_i$, $i\in\setN$, are epigraphic variables, and in \eqref{eq:obj_sum} we have equality and not inclusion since the set of minimizers is a singleton due to the regularization term $\tau \|x_i - \xopt_i\|^2$. 
	Note that \eqref{eq_epigame_sum} is separable across $i \in \setN$, with each subproblem corresponding to an epigraphic reformulation of the fixed point characterization of the regularized problem \eqref{eq_pullout_ind_tau} for $\eta =0$ (this is the value of $\eta^{(k)}$ when the proposed algorithm has converged to $\xopt$) and $\bar{x}_i = \xopt_i$, for all $i \in \setN$. 
	The latter is identical to $\Phi_{x_0}$ with $x_0 = \xopt$. 
	It follows from \eqref{eq_epigame_sum} that $\xopt = \Phi_{\xopt}(\theta_1,\ldots,\theta_M)$.
Also note that we have introduced one epigraphic variable per agent $i \in \setN$; we will invoke in the sequel the fact that \eqref{eq_epigame_sum} is convex due to Assumption \ref{assum_eqNE_VIsol} . However,
if we further assume that for all $i\in\setN$, for every fixed $x_{-i}\in\setX_{-i}$ and $\theta\in\Theta$, the functions $f_i(\cdot,x_{-i})$ and $g(\cdot,x_{-i},\theta)$ are each convex, we would only need one epigraphic variable, as we could perform an epigraphic reformulation only for $g$; this would give rise to the constraint in \eqref{eq_constr_max}, which is common to all agents. 

 Let $\setC$ denote a minimal cardinality compression set for $\xopt$ in \eqref{eq_epigame_sum}. We claim that $\setC_0 \subseteq \setC$. To show this, assume for the sake of contradiction that there exists $k \in \{1,\ldots,M\}$ such that $\theta_k \in \setC_0$ but $\theta_k \notin \setC$. Consider the set $\{\theta_1,\ldots,\theta_M\} \setminus \{\theta_k\} \supseteq \setC$, and notice that this has to be a compression set for $\xopt$ in \eqref{eq_epigame_sum} as it is a superset of $\setC$. By Definition \ref{defn_CS}, this implies that $\xopt = \Phi_{\xopt}(\{\theta_1,\ldots,\theta_M\} \setminus \{\theta_k\})$ (recall that the solution of \eqref{eq_epigame_sum} is given by $\xopt = \Phi_{\xopt}(\theta_1,\ldots,\theta_M)$). However, $\theta_k \in \setC_0$; as the latter coincides with the set of support samples due to the imposed non-degeneracy assumption, we have by Definition \ref{defn_suppsample} $\xopt \neq \Phi_{\xopt}(\{\theta_1,\ldots,\theta_M\} \setminus \{\theta_k\})$. This establishes a contradiction, showing that $\setC_0 \subseteq \setC$ (hence $|\setC_0| \leq | \setC |$).

By Assumptions \ref{assum_eqNE_VIsol}, \eqref{eq_epigame_sum} is a convex scenario program, and admits a unique solution due to the fact that the objective function in \eqref{eq:obj_sum} is strictly convex. Moreover, it has a non-empty feasibility region in view of Proposition \ref{prop_eqNE_VIsol}. Therefore, by  \cite{CampiGaratti2008SIAM}, \cite{Calafiore2010SIAM}, we have that any minimal cardinality compression set $\setC$ has cardinality upper-bounded by $(n+1)N$, i.e., the number of decision variables in \eqref{eq_epigame_sum}. Therefore, $|\setC_0| \leq | \setC | \leq (n+1)N$. As a result, $|\setC_0|$ can be upper-bounded by the \emph{a priori} known quantity $(n+1)N$.
As Theorem \ref{thm_apost} holds for any compression cardinality $d^* \geq |\setC_0|$, we can apply it with $d^* = (n+1)N$. Hence, Theorem \ref{thm_apriori} as well as the second part of Corollary \ref{cor:cost_deter} directly follow, concluding the proof. \hfill $\blacksquare$

\section{Computation of the compression set cardinality}
\label{sec_apost_eval}
The result of Theorem \ref{thm_apost} relies on the computation of the compression cardinality $d^*$, which by Definition~\ref{defn_CS} is bounded by $M$.  An \emph{a posteriori} estimate of the compression cardinality can be obtained through different methodologies, whose design may be tuned on the specific case. It follows from \eqref{eq_resthm} that the closer the estimate to the minimal cardinality of the compression sets in $\mathfrak{C}$, the less conservative the probabilistic guarantees on the robustness performance of the solution.	 
	In \cite[\S II]{CampiEtAl2018TAC} a greedy procedure is outlined to estimate (an upper bound to) the minimal compression cardinality; for completeness, we summarize this procedure in Algorithm~\ref{alg_suppconstenum}.
	According to this, a compression set $\setC$ is constructed progressively by removing samples one by one (step 2). Only if their removal leaves the solution unaltered they are discarded (step 3-4); this is then repeated till no further sample can be removed without changing $\xopt$.
	\begin{algorithm}[t]
		\caption{Greedy computation of compression set}
		\label{alg_suppconstenum}
		\begin{algorithmic}[1]
			\small
			\Require $\setC = \{\theta_1,\ldots,\theta_M\}$, $\xopt \in \Omega$;
			\ForAll{$m\in\{1,\ldots,M\}$}
			\State $\xcirc \gets \Phi(\setC\setminus\theta_m)$;
			\If{$\xcirc = \xopt$}
			\State $\setC \gets \setC\setminus\theta_m$;
			\EndIf
			\EndFor
		\end{algorithmic}
	\end{algorithm}
	However, there are two drawbacks: first, the computational cost is generally high, as $\Phi(\cdot)$ should be evaluated at least $M$ times, where each of these operations typically involves an asymptotic scheme (as, e.g., in Algorithm \ref{alg_iterprox}); second, in practice, limited numerical accuracy makes the evaluation of the condition at Step 3 of Algorithm~\ref{alg_suppconstenum} amenable to numerical errors. 

To alleviate these, we provide a computationally efficient way to determine a compression set, and hence $d^*$, by direct inspection of the NE.
To achieve this, we impose certain NE uniqueness requirements. However, it should be noted that for the wide class of aggregative games, the additional structure required in the proposition below 
implies only uniqueness of an aggregate strategy, where multiple equilibria may exist.
This is summarized in the following proposition.

\begin{prop}\label{prop_apost_eval}
	Consider Assumption~\ref{assum_eqNE_VIsol}. Further assume that for all $M \in \mathbb{N}$, either
		\begin{enumerate}
	\item $\game$ admits a unique NE;
	\item or, $g$ depends on the aggregate strategy\footnote{With a slight abuse of notation, in the second part of the proposition it is to be understood that for all $i\in\setN$ and for any given $x_{-i}$, Assumption~\ref{assum_eqNE_VIsol} refer to the function $f_i(\cdot,x_{-i}) + g(\sigma(\cdot,x_{-i}),\theta)$. } $\sigma(x)\colon x \mapsto \sum_{i\in\setN}x_i$, and $\game$ admits a unique  NE aggregate $\sigma(x)$.
	\end{enumerate}
	Then, the set $\setY^{\ast}\triangleq \{m\in\{1,\ldots,M\}\colon \yopt_m > 0\}$ corresponds to the indices of a compression set, i.e., $\xopt = \Phi(\theta_1,\ldots,\theta_M) = \Phi(\{\theta_m\}_{m\in \setY^{\ast}})$.
\end{prop}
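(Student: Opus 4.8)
The plan is to show that deleting from the sample the scenarios carrying zero weight in $\yopt$ does not alter the variational characterization of $\xopt$, so that $\xopt$ remains an equilibrium of the game built on $\{\theta_m\}_{m\in\setY^{\ast}}$, and then to invoke the uniqueness hypotheses to conclude that the single-valued map $\Phi$ returns exactly $\xopt$ on the reduced sample. The first step is a complementary-slackness observation for the auxiliary (maximizing) player: at the fixed point of Algorithm~\ref{alg_iterprox} the regularization is inactive ($\eta=0$, $\bar{y}=\yopt$), so $\yopt$ genuinely maximizes $\hat{g}(\xopt,\cdot)$ over $\Delta$; since $\hat{g}(\xopt,y)=\sum_m y_m g(\xopt,\theta_m)$ is linear in $y$, by \eqref{eq_maxcont} any maximizer is supported on the active scenarios, so $\yopt_m>0\Rightarrow g(\xopt,\theta_m)=\gammaopt$. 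Hence $g(\xopt,\theta_m)=\gammaopt$ for every $m\in\setY^{\ast}$ and, as $\setY^{\ast}\neq\emptyset$, $\max_{m\in\setY^{\ast}}g(\xopt,\theta_m)=\gammaopt=\max_{m\in\{1,\ldots,M\}}g(\xopt,\theta_m)$.

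Next I would verify that $\xopt$ is a NE of the reduced game. Let $\tilde{y}$ be the restriction of $\yopt$ to the indices in $\setY^{\ast}$; it lies in the simplex over $\setY^{\ast}$. Substituting $(\xopt,\tilde{y})$ into the $x$-only form of the variational inequality \eqref{eq_constrScn3bis} written for the reduced augmented game, the terms indexed by $m\notin\setY^{\ast}$ vanish because $\yopt_m=0$, while the $\max$ term is unchanged by the previous step; the reduced inequality therefore coincides verbatim with the one already satisfied by $(\xopt,\yopt)$ for the full game. Applying Proposition~\ref{prop_eqNE_VIsol} to the reduced augmented game then yields that $\xopt$ is a NE of the game defined on $\{\theta_m\}_{m\in\setY^{\ast}}$.

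It remains to identify $\Phi(\{\theta_m\}_{m\in\setY^{\ast}})$ with $\xopt$, and here the two alternative hypotheses enter (both are assumed for every sample size, hence in particular for the reduced instance of $|\setY^{\ast}|$ scenarios). Under (1) the reduced game has a unique NE, which by the previous paragraph must be $\xopt$, giving $\Phi(\{\theta_m\}_{m\in\setY^{\ast}})=\xopt$ at once. Under (2) the reduced game may admit several equilibria, but all share the same aggregate, necessarily $\sigma(\xopt)$; I would then establish the inclusion of the reduced NE set into the full one, i.e.\ any reduced NE $\hat{x}$ satisfies $\sigma(\hat{x})=\sigma(\xopt)$, so every retained scenario still attains the maximum of $g(\sigma(\hat{x}),\cdot)$ over the whole sample, and re-inserting the discarded scenarios with zero weight produces a feasible pair for the full VI, whence $\hat{x}\in\Omega$. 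Since $\Phi$ selects the minimum-norm equilibrium and $\xopt$—the minimizer over the larger set $\Omega$—belongs to the smaller reduced set, it is a fortiori the minimizer there; the auxiliary variable does not interfere because, all retained scenarios being active, the minimum-norm multiplier is uniform over $\setY^{\ast}$ and thus identical across reduced equilibria. Hence $\Phi(\{\theta_m\}_{m\in\setY^{\ast}})=\xopt$ in both cases, and $\{\theta_m\}_{m\in\setY^{\ast}}$ is a compression set by Definition~\ref{defn_CS}.

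The main obstacle is case (2): proving the inclusion of the reduced NE set into the full one and ruling out that the norm-based tie-break selects a different equilibrium on the reduced instance. The aggregative structure of $g$ is precisely what makes the active set stable under re-insertion of scenarios (because $g$ sees only $\sigma(x)$, which the unique-aggregate hypothesis pins down) and what keeps the minimum-norm auxiliary variable constant across reduced equilibria, thereby closing the selection argument.
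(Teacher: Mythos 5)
Your overall architecture is essentially the paper's, modulo two cosmetic differences: you delete all zero-weight samples at once (the paper removes them one at a time, which is why it only needs $\max_{m\le M-1} g \le \max_{m\le M} g$ rather than your complementary-slackness equality), and you use complementary slackness up front (the paper derives the same fact in \eqref{eq_KKTy} inside its Case~2). Your step~1 is correct --- at the fixed point $\yopt$ maximizes the linear function $\hat g(\xopt,\cdot)$ over $\Delta$, so $\yopt_m>0\Rightarrow g(\xopt,\theta_m)=\gammaopt$ --- as is your step~2 (the reduced VI is satisfied by $(\xopt,\tilde y)$, hence $\xopt$ is a NE of the reduced game by Proposition~\ref{prop_eqNE_VIsol}), and case~(1) then closes by uniqueness exactly as in the paper. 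In case~(2), your embedding of the reduced NE set into $\Omega$ via the unique aggregate, together with $g(\sigma(\xopt),\theta_m)\le\gammaopt$ for the discarded samples, is sound and in fact subsumes the paper's ``Case~2'' contradiction argument.

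The genuine gap is in your final tie-break step for case~(2). First, ``$\xopt$ is the minimizer over the larger set $\Omega$'' is not what $\Phi$ guarantees: \eqref{eq_NEVIsel} minimizes the \emph{joint} norm $\|(x,y)\|^2$, so $\xopt$ need not minimize $\|x\|$ over $\Omega$, and an a-fortiori argument on the $x$-sets alone does not go through. You noticed this and patched it with the claim that the minimum-norm multiplier is uniform over $\setY^{\ast}$; that claim is unjustified and generically false. Even though the $y$-player is indifferent over the simplex on $\setY^{\ast}$ when all retained scenarios are active, the pair $(\hat x,\hat y)$ must \emph{also} satisfy the $x$-players' optimality conditions, i.e., $\hat y$ must produce the correct weighted gradient $\sum_{m\in\setY^{\ast}}\hat y_m\nabla_{x}g(\hat x,\theta_m)$; the admissible multipliers form a strict subset of the simplex, and the norm-minimizer over that subset has no reason to be uniform. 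Fortunately the patch is unnecessary: zero-padding preserves the norm, so
\begin{equation*}
\|(\hat x,(\hat y^\traspo,0,\ldots,0)\trasp)\| = \|(\hat x,\hat y)\|,
\qquad
\|(\xopt,\tilde y)\| = \|(\xopt,\yopt)\|,
\end{equation*}
and if $(\hat x,\hat y)$ were a reduced solution of strictly smaller norm than $(\xopt,\tilde y)$, its zero-padded extension would be a full solution of norm strictly smaller than $\|(\xopt,\yopt)\|$, contradicting the optimality of $(\xopt,\yopt)$ in \eqref{eq_NEVIsel} --- which is exactly the paper's Case~1 argument. (To conclude $\Phi(\{\theta_m\}_{m\in\setY^{\ast}})=\xopt$ rather than mere optimality of $(\xopt,\tilde y)$, note the minimum-norm solution is unique, since the solution set of the monotone VI is convex and the squared norm strictly convex.) With this replacement your proof is complete and coincides in substance with the paper's.
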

\begin{proof}
\emph{Part 1: Uniqueness of NE. } 
	Fix any $(\theta_1,\ldots,\theta_M) \in \Theta^M$ and notice that it forms a (trivial) compression set for $\xopt$. Let $(\xopt,\yopt)$ be a solution of the augmented game $\gameaug$, where $\yopt = (\yopt_m)_{m=1}^M$. 
	
	To prove that $\xopt = \Phi(\{\theta_m\}_{m\in \setY^{\ast}})$ it suffices to show that the solution returned by $\Phi$ remains unaltered after removing all samples from $\{\theta_1,\ldots,\theta_M\}$ whose associated component of $\yopt$ is zero.
 To this end, suppose that at least one such sample exists: without loss of generality, assume $\yopt_M = 0$ (i.e., that sample has index $M$). We will first show that $\{\theta_1,\ldots,\theta_{M-1}\}$ is a compression set, i.e., $\xopt = \Phi(\theta_1,\ldots,\theta_{M-1})$.
	Let
	$\game^- = \langle \setN,(\setX_i)_{i\in\setN}, (J_i)_{i\in\setN},\{ \theta_j \}_{j=1}^{M-1} \rangle$ be the game with samples $\{\theta_1,\ldots,\theta_{M-1}\}$. Moreover, let $\gameaug^-$ denote the associated augmented game, and $\Delta^-$ the simplex in $\mathbb{R}^{M-1}$. 
	Since $(\xopt,\yopt)$ is an NE of $\gameaug$, it will satisfy the VI in \eqref{eq_constrScn3bis}. At the same time,
	every solution $(x^-,y^-)\in\setX\times\Delta^-$ of the augmented game $\gameaug^-$ satisfies the following VI:
	\begin{align}\label{eq:eq_constrScn_m}
		&\sum_{i\in\setN} (x_i -x_i^-)\trasp \nabla_{x_i}\big(f_i(x^-)+\sum_{m=1}^{M-1} y_m^- g(x^-,\theta_m)\big) \nonumber \\
		&+ \sum_{m=1}^{M-1} y_m^-  g(x^-,\theta_m) - \max_{m\in\{1,\ldots,M-1\}}g(x^-,\theta_m) \geq 0.
	\end{align}
Set $x^- = \xopt$ and $y^- = (\yopt_m)_{m=1}^{M-1}$. Under this choice $(x^-,y^-)$ satisfies \eqref{eq:eq_constrScn_m}, as $\max_{m\in\{1,\ldots,M-1\}}g(\xopt,\theta_m) \leq \max_{m\in\{1,\ldots,M\}}g(\xopt,\theta_m)$. Equivalently, $(\xopt,(\yopt_m)_{m=1}^{M-1})$ is an NE for $\gameaug^-$, and by applying Proposition~\ref{prop_eqNE_VIsol} to $\game^-$ and $\gameaug^-$ we have that $\xopt$ is an NE for $\game^-$. 
However, due to the uniqueness assumption, $\xopt$ has to be the only NE of $\game^-$, showing that $\xopt = \Phi(\theta_1,\ldots,\theta_{M-1})$.

Following the same procedure, removing one by one all samples for which the associated elements of $\yopt$ are zero, shows that $\xopt = \Phi(\theta_1,\ldots,\theta_M) = \Phi(\{\theta_m\}_{m\in \setY^{\ast}})$, thus concluding the proof of the first part.

\emph{Part 2: Uniqueness of NE aggregate.} 
The proof follows the same arguments as in Part 1 with the following modifications.
The derivation until the discussion right after \eqref{eq:eq_constrScn_m} remains unaltered, showing that $(\xopt,(\yopt_m)_{m=1}^{M-1})$ is a NE of $\gameaug^-$.  
To prove that $\xopt = \Phi(\theta_1,\ldots,\theta_{M-1})$ it suffices to show that $(\xopt,(\yopt)_{m=1}^{M-1})$ is the \emph{minimum norm} NE of $\gameaug^-$. We thus assume for the sake of contradiction that $(\hat{x},\hat{y})\in\setX\times\Delta^-$ is the NE of $\gameaug^-$ that achieves the minimum norm,  i.e., $ \|(\hat{x},\hat{y})\|^2 < \|(\xopt,(\yopt)_{m=1}^{M-1})\|^2$. 
We distinguish two cases:

\emph{Case 1: $g(\sigma(\hat{x}), \theta_M) \leq \max_{m \in \{1,\ldots,M-1\}}g(\sigma(\hat{x}),\theta_m)$.} 
Under this condition observe that $(\hat{x},(\hat{y}^\traspo,0)\trasp)$ satisfies the VI in \eqref{eq_constrScn3bis} for the game with $M$ samples. 
However, as $(\xopt,\yopt)$ is the minimum norm equilibrium for that game, we have that $\|(\xopt,\yopt)\|^2 \leq \|(\hat{x},(\hat{y}^\traspo,0)\trasp)\|^2$. Overall, recalling $\yopt_M = 0$, $\|(\xopt,(\yopt)_{m=1}^{M-1})\|^2 = \|(\xopt,\yopt)\|^2 \leq \|(\hat{x},(\hat{y}^\traspo,0)\trasp)\|^2 = \|(\hat{x},\hat{y})\|^2$, thus establishing a contradiction. We can then show that $\xopt = \Phi(\theta_1,\ldots,\theta_M) = \Phi(\{\theta_m\}_{m\in \setY^{\ast}})$ as in the last paragraph of Part 1.

\emph{Case 2: $g(\sigma(\hat{x}), \theta_M) > \max_{m \in \{1,\ldots,M-1\}}g(\sigma(\hat{x}),\theta_m)$.}
We will show that, under our assumptions, this case cannot occur. By the uniqueness assumption we have that $\sigma(\hat{x}) = \sigma(\xopt)$ for any equilibrium $\hat{x} \neq \xopt$ (the NE is not necessarily unique, but all equilibria have the same aggregate).
We then have
\begin{align}\label{eq:contr}
g( \sigma(\xopt), \theta_M) &{} = g(\sigma(\hat{x}), \theta_M)\nonumber\\  
  &{} > \max_{m \in \{1,\ldots,M-1\}}g(\sigma(\hat{x}),\theta_m)\nonumber \\
  &{} \geq g(\sigma(\hat{x}),\theta_m) = g(\sigma(\xopt),\theta_m),
\end{align}
for any $m\in{1,\ldots,M-1}$. Since \eqref{eq:contr} holds for any $m$,
\begin{equation}
	g(\sigma(\xopt), \theta_M) > \max_{m \in \{1,\ldots,M-1\}} g(\sigma(\xopt),\theta_m).
\end{equation}
Consider now \eqref{eq_constr_max}. 
By direct computation of the KKT optimality conditions \cite[\S 6.2.1]{RustemHoweBOOK} of \eqref{eq_epigame} and \eqref{eq_pullout_coord}, respectively, it can be verified that the decision variable $y \in \Delta$ introduced in \eqref{eq_pullout_ind}--\eqref{eq_pullout_coord} is a \emph{shadow price} for the constraint \eqref{eq_constr_max}.
Then, by the complementary slackness condition,
\begin{equation}\label{eq_KKTy}
y_m^{\ast}\left(g(\sigma(\xopt),\theta_m)-\gammaopt\right) = 0,\; \forall m\in\{1,\ldots,M\}.
\end{equation}
Since $y_M = 0$ implies $g(\sigma(\xopt),\theta_M) \leq \gammaopt$ we obtain
\begin{align}
\max_{m \in \{1,\ldots,M\}} g(\sigma(\xopt),\theta_m) = \max_{m \in \{1,\ldots,M-1\}} g(\sigma(\xopt),\theta_m). \label{eq:nonactive}
\end{align}
From \eqref{eq:nonactive} it follows $\max_{m \in \{1,\ldots,M-1\}}g(\sigma(\xopt),\theta_m) = \max_{m \in \{1,\ldots,M\}}g(\sigma(\xopt),\theta_m)\geq g(\sigma(\xopt),\theta_M)$, which contradicts \eqref{eq:contr} and concludes the proof.
\end{proof}

Based on the shadow price interpretation of $y$ (see proof of Proposition~\ref{prop_apost_eval}) notice that if $g(\xopt,\theta_m) < \gammaopt$ (inactive constraint) then $\yopt_m = 0$. Note that samples with $\yopt_m=0$ can be removed without altering $\xopt$ due to the imposed uniqueness requirements; otherwise, the feasibility region of the VI in \eqref{eq:eq_constrScn_m} may enlarge, possibly resulting in a different minimum norm NE.
Moreover, it should be noted that Proposition~\ref{prop_apost_eval} does not provide guarantees that a minimal cardinality compression set is determined; this can be obtained by Algorithm~\ref{alg_suppconstenum} (see also \cite{CampiEtAl2018TAC}). However, the important implication of Proposition~\ref{prop_apost_eval} is that the cardinality of a compression set is readily available by inspecting $\yopt$.

\section{Case study: Electric vehicle charging control}
\label{sec_example}

\subsection{Problem set-up}
\label{sec_ex_setup}

We consider a stylized electric vehicle (EV) charging control problem, with EVs being risk-averse, selfish entities interested in minimizing their own cost. Let $\{1,\ldots,N\}$ index the finite population of EV vehicles/agents.
We denote by $x_i \in \mathbb{R}^n$ the demand profile each EV seeks to determine over $n$ time slots, where for simplicity these are taken to be of unit length (1 hour). Vehicles' charging strategy is in response to a pricing signal received from a coordinator, which in turn depends on the demand profiles of all agents. We consider price to be an affine function of the aggregate strategy $\sigma(x)\colon x \mapsto \sum_{i\in\setN}x_i$, but other choices are also supported by our theoretical analysis.
Price is subject to uncertainty, e.g., externalities acting on the energy spot market, encoded by the random variable $\theta\in\Theta$, which we model by means of scenarios. In particular, each scenario is a realization of prices along the considered $n$-slot interval. 
Note that these scenarios are i.i.d., however, each of them is a finite horizon path, whose entries can be correlated. Each agent $i=1,\ldots,N$ aims at minimizing
\begin{align}
&f_i(x_i,\xmenoi) + \max\limits_{m \in\{1,\ldots,M\}} g(x_i,\xmenoi,\theta_m) =  \nonumber \\ & x_i\trasp (A_0 \sigma(x) + b_0)+ \frac{1}{N} \max\limits_{m \in\{1,\ldots,M\}} \sigma(x)\trasp (A_m \sigma(x) + b_m),
\end{align}
where $A_m\in\mathbb{R}^{n\times n}$, for $m = 0,1,\ldots,M$, are diagonal matrices, and $b_m\in\mathbb{R}^{n}$. 
Moreover, we assume the charging operations are subject to
$\setX_i = \{x_i\in\mathbb{R}^n:\; \mathbf{1}\trasp x_i \geq E_i,\; 0 \leq x_{ij} \leq P_i, ~\forall j=1,\ldots,n\}$,
where $E_i,P_i \in \mathbb{R}$ designate the desired final state of charge (SoC) and the maximum power deliverable by the charger, respectively. 

We analyse the results of several randomly generated cases, differing in the parameters characterizing the EV constraints $\setX_i$, selected from a uniform random distribution: specifically, $P_i\in[6,15]$ kW, and $E_i$ is chosen to be feasible in the specified time interval ($\sim$0--35 kWh per 12 h interval).
The pairs $\{A_m,b_m\}_{m = 1}^{M}$ are i.i.d.~extracted from a lognormal distribution for the diagonal entries of $A_m$, and a uniform distribution for the vectors $b_m$. The nominal electricity price, i.e., the diagonal entries $\{a_t\}_{t=1}^n$ of the matrix $A_0$, have been derived by rescaling a winter weekday demand profile in the UK~\cite{dem_data01}, whereas $b_0 = 0$. 

It should be noted that even though this example fits in the class of aggregative games, it does not necessarily meet the uniqueness requirement of the second part of Proposition \ref{prop_apost_eval}. However, we have empirically observed that the main conclusion of the proposition still holds, namely, the minimum norm solution returned by Algorithm~\ref{alg_iterprox} remains unaltered when the algorithm is fed only with the samples with indices in $\setY^{\ast}$. Informally, this happens due to the fact that for any feasible problem instance $\mathbf{1}\trasp x_i \geq E_i$ will always be binding at the optimum, and as result $\mathbf{1}\trasp \sigma(x)$ will be constant for any NE $x$ (see also transparent plane in Figure \ref{fig_suppconstr}); a detailed investigation of this issue is topic of current research.

\begin{figure}
	\centering
	\includegraphics[width=8.4cm, trim={0 0 0 0.55cm},clip]{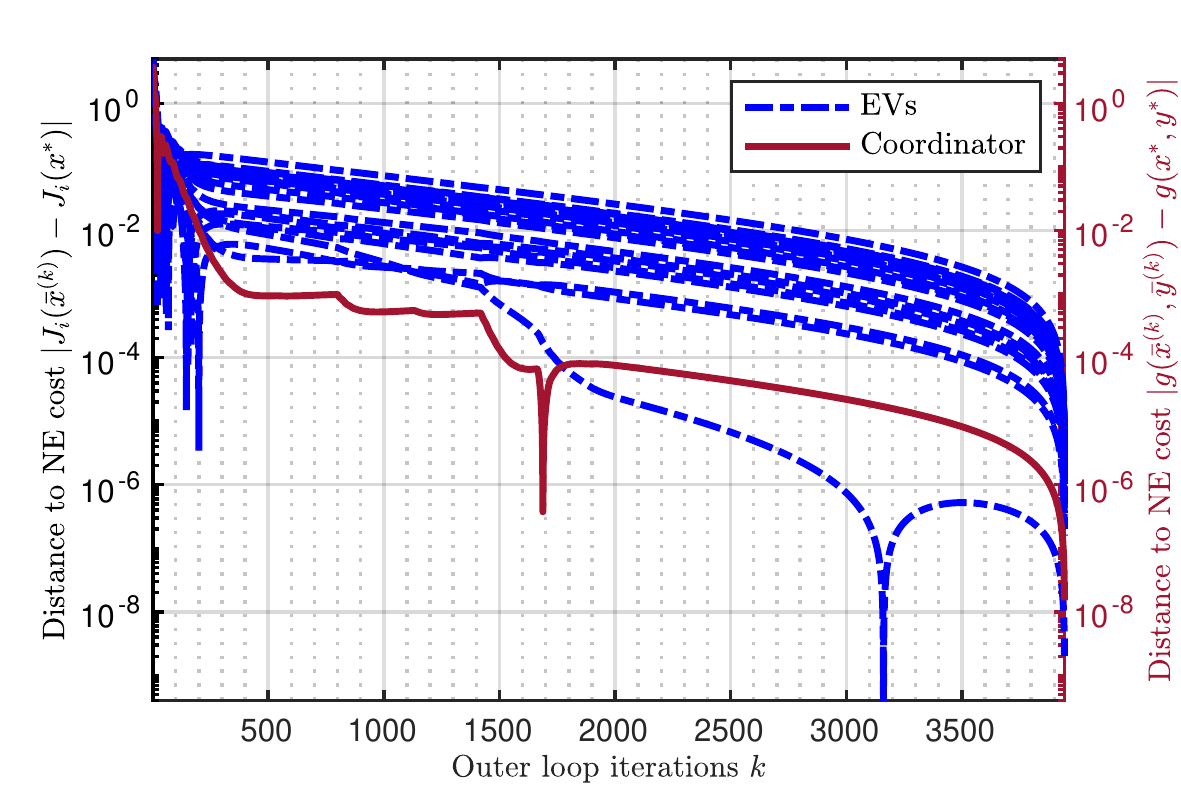}    
	\vspace{-8pt}
	\caption{Convergence of EV agents' and coordinator's costs (outer loop), for $N = 20$, $n=24$, $M=500$: a dominant linear convergence rate can be observed. $J_i$ denotes the cost of each player $i = 1,\ldots,N$, and the coordinator of the augmented game $\gameaug$, defined by the objective functions in~\eqref{eq_pullout_ind} and \eqref{eq_pullout_coord}, respectively.} 
	\label{fig_costconv}
\end{figure}
\begin{figure}
	\centering
	\includegraphics[width=8.0cm, trim={0 0 0 0.5cm},clip]{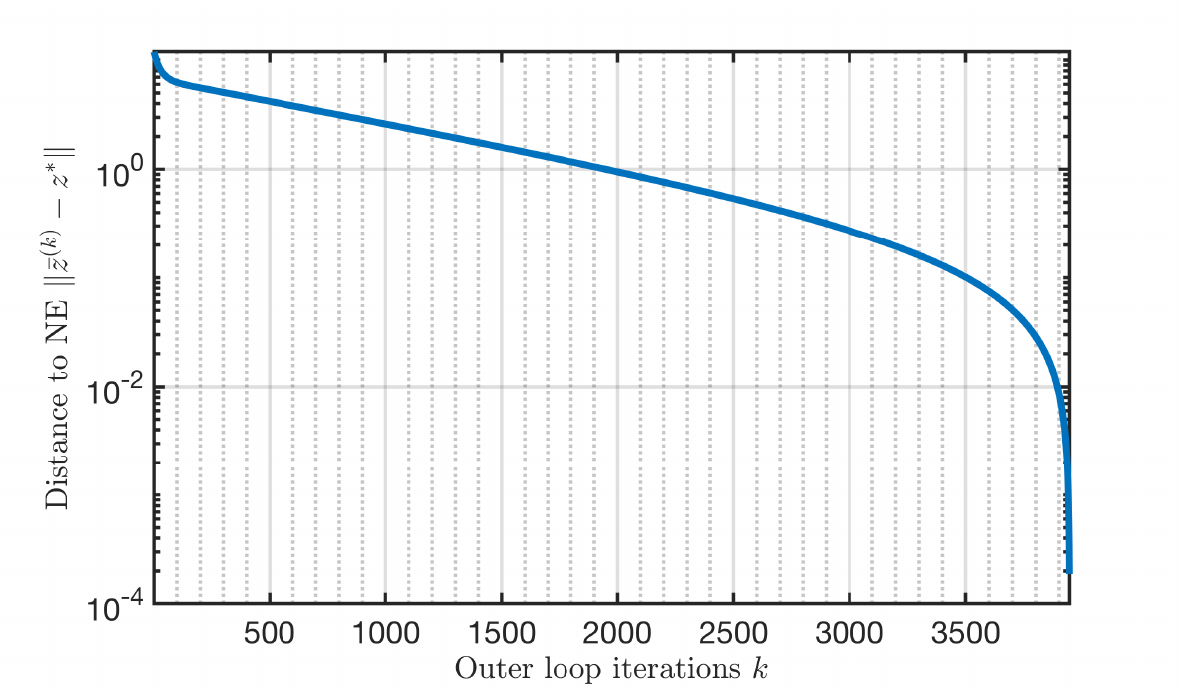}    
	\vspace{-8pt}
	\caption{Convergence of the solution vector to the NE (outer loop) for $N = 20$, $n=24$, $M=500$: a dominant linear convergence rate can be observed.} 
	\label{fig_solconv}
\end{figure}

\begin{table}
	\centering
	\caption{Empirical validation of the \emph{a posteriori} result of Theorem~\ref{thm_apost}.}\label{tab_violrate}
	\vspace{-2pt}
	\begin{tabular}{rcccc}\hline
		$d^*$ [\hspace{0.5ex}-\hspace{0.5ex}] & 4 & 6 & 7 & 9 \\ 
		Empirical~$V(\xopt)$ [\%] & 0.98 & 1.09 & 1.26 & 1.33  \\ 
		$\varepsilon(d^*)$ - Thm~\ref{thm_apost} [\%] & 8.06 & 9.76 & 10.55 & 12.06  \\
		$\varepsilon(d^*)$ - bound \eqref{eq:eps_tight} [\%] & 5.30 & 6.11 & 6.49 & 7.22  \\\hline 
	\end{tabular}
\end{table}
\subsection{Simulation results}
\label{sec_ex_sim}
The NE charging schedules have been obtained by implementing Algorithm~\ref{alg_iterprox} with $\gamma_{\mathrm{inn}} = 10^{-14}$, $\gamma_{\mathrm{out}} = 10^{-5}$ and $\tau \in [4,6]$. At each iteration of the proposed algorithm a quadratic optimization needs to be solved; this was performed on a dual-core 7th gen.~Intel processor using MATLAB. Figures ~\ref{fig_costconv}--\ref{fig_solconv} show the convergence of Algorithm~\ref{alg_iterprox} in the computation of the NE for $N=20$ EVs, $n = 24$ and $M = 500$. A dominant linear convergence rate can be observed, and less than 4000 outer loop iterations were needed to meet the desired exit accuracy $\gamma_{\mathrm{out}}$. The inner loop  enjoys similar convergence rate (not shown for space reasons), and less than 30 iterations (15 in average) are needed to achieve an error smaller than $\gamma_{\mathrm{inn}}$.

To validate the \emph{a posteriori} result of Theorem~\ref{thm_apost}, Table~\ref{tab_violrate} shows the average robustness performance of several solutions (with $N=20$, $n = 24$) obtained from different sets of $M=500$ samples, grouped according to the \emph{a posteriori} observed compression cardinality $d^*$; we have set $\beta = 10^{-6}$. The violation rate $V(\xopt)$ of each solution is empirically computed using $10^6$ newly extracted samples (according to the same aforementioned distributions) and counting the fraction of them that result in a change of the computed NE.
Consistently with \cite{CampiEtAl2018TAC}, we note that the observed value of $d^*$ is indicative of the confidence level on the equilibrium robustness. 
The experimental results are compared with the theoretical bound provided by Theorem~\ref{thm_apost} (third row). For non-degenerate problems, the conservatism of the latter can be reduced by employing the tighter expression for $\varepsilon(\cdot)$ reported in \eqref{eq:eps_tight}, leading to the fourth row of Table~\ref{tab_violrate}. However, note that in general it is difficult to verify whether a given problem is non-degenerate, thus preventing the use of \eqref{eq:eps_tight}.
We observe that a bound of $\sim$2--3\% could have been achieved with Theorem \ref{thm_apost} by increasing the sample size to $M=2000$.

\begin{figure}
	\centering
	\includegraphics[width=8.4cm, trim={0 0 0 1.1cm},clip]{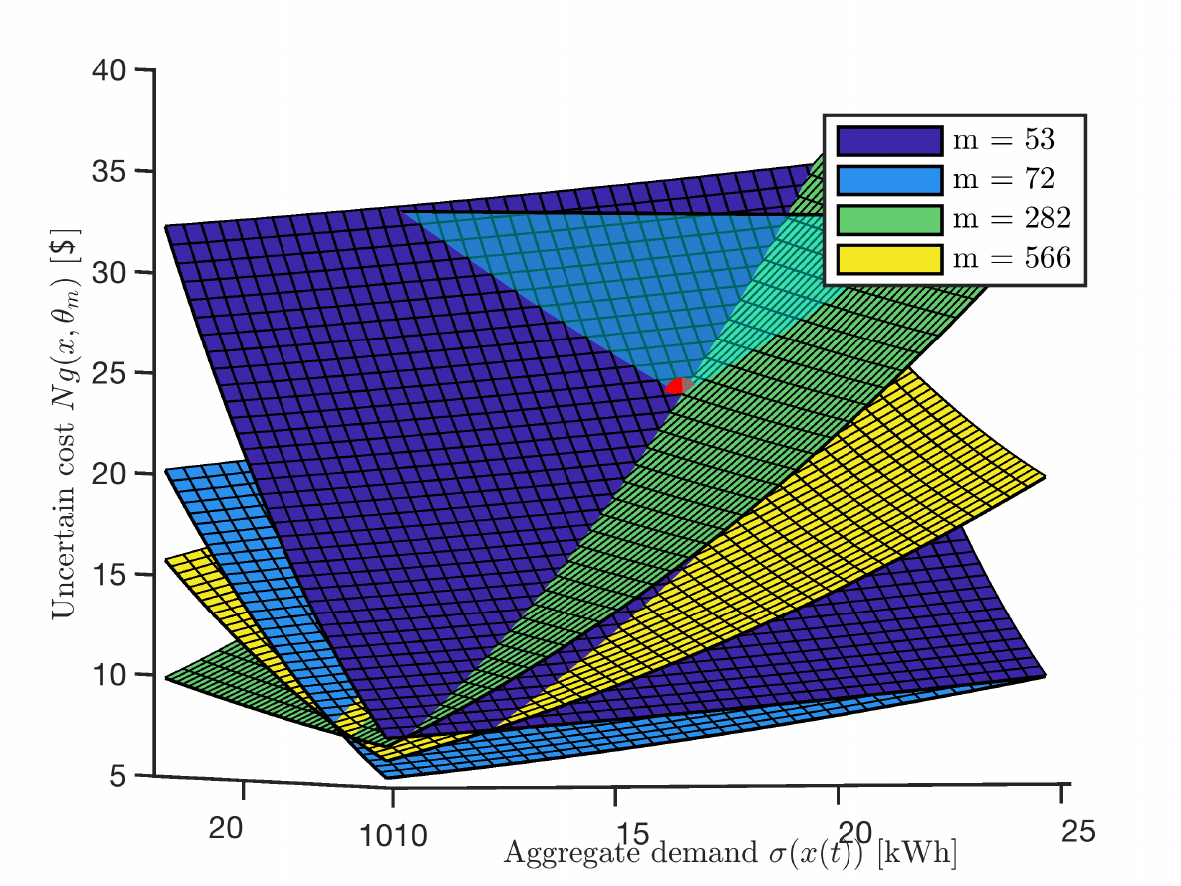}    
	\vspace{-9pt}
	\caption{Case with $N=20$, $n=2$: Uncertain cost component $Ng(\xopt,\theta_m)$ for a subset $m\in\{53,72,282,566\}$ of the $M = 1000$ samples used for the derivation of the NE $\xopt$. In this case $d^*=2$, with $\{\theta_{53},\theta_{282}\}$ supporting the solution together with the SoC constraint which is binding in this case. This is the transparent plane, representing all aggregate strategies fulfilling $\sigma(x)|_{t=1}+\sigma(x)|_{t=2} = \sum_{i\in\setN} E_i$ over the considered time interval. $\sigma(\xopt)$, visible in red, lies at the intersection of the three surfaces.} 
	\label{fig_suppconstr}
\end{figure}

\begin{figure}
	\centering
	\includegraphics[width=8.4cm, trim={0 0 0 0.2cm},clip]{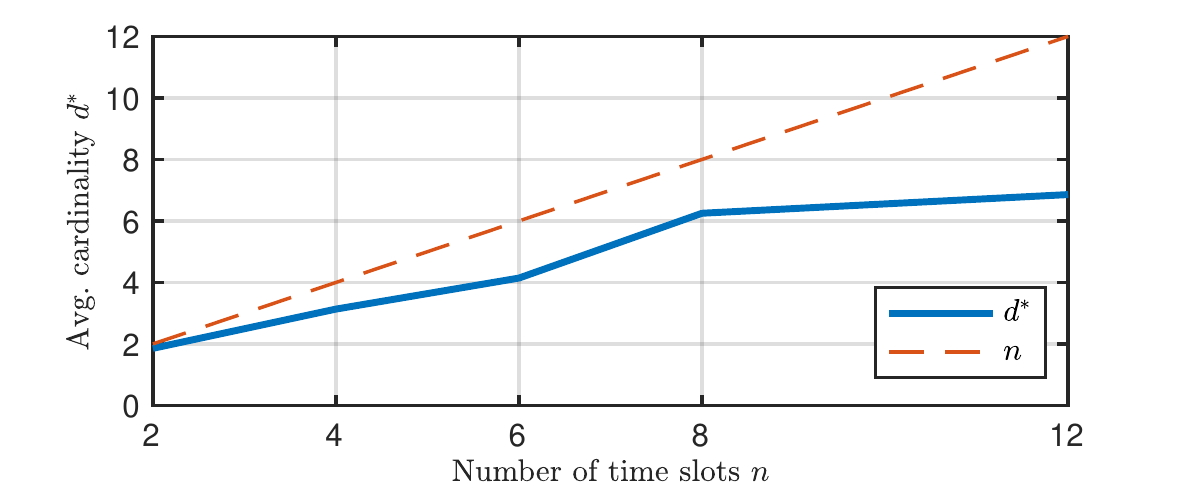}    
	\vspace{-9pt}
	\caption{Empirical validation of the \emph{a priori} result of Theorem~\ref{thm_apriori}. The plot shows the average compression set cardinality $d^*$ observed over 50 trials, corresponding to different randomly generated cases corresponding to different values of $n$ and $M$. In all cases, $d^*$ is bounded by $n$, and hence also by the theoretical bound $(n+1)N$.} 
	\label{fig_T_vs_barn}
\end{figure}

A visual representation of the concept of compression set is given in Fig.~\ref{fig_suppconstr}. The plot depicts the curves expressing the uncertain cost term $Ng(\xopt,\theta_m)$ associated to a subset $m\in\{53,72,282,566\}$ of the $M = 1000$ samples used for the derivation of the NE $\xopt$. Values are plotted as a function of the aggregate demand $(\sigma(x)|_{t=1},\sigma(x)|_{t=2})$ on an interval around $\sigma(\xopt)$. In this case the (minimal) compression cardinality is $d^*=2$, with $\{\theta_{53},\theta_{282}\}$ supporting the solution together with the constraint on the target SoC which is binding in this case (transparent plane). Note that in this instance the constraints on the power rate $P_i$ are not active, and omitted from the plot for clarity.

We now investigate numerically the validity of the \emph{a priori} result of Theorem \ref{thm_apriori}.
Fig.~\ref{fig_T_vs_barn} shows the average compression set cardinality $d^*$ (solid line) observed over 50 trials, corresponding to different randomly generated cases corresponding to different values of $n$ and $M$.
In all cases, $d^*$ is bounded by $(n+1)N$ as suggested by Theorem \ref{thm_apriori}. In fact, the empirically calculated cardinality $d^*$ is significantly lower, suggesting that in this case study an \emph{a posteriori} quantification is less conservative. Moreover, in all our numerical investigations we noticed that $d^*\leq n$, i.e., the empirical estimate of the compression set cardinality is independent of the number of agents and is bounded by the number of individual decision variables (dashed line). We conjecture that for the aggregative EV charging game considered here, involving affine price functions, the so called support rank (see \cite{SchildbachEtAl2013SIAM} for a definition) offers a tighter bound on the compression set cardinality compared to the total number of decision variables $(n+1)N$.

 \section{Concluding remarks} \label{sec_conclusion}
We considered the problem of NE computation in multi-agent games in the presence of uncertainty, and accompanied them with \emph{a priori} and \emph{a posteriori} certificates regarding the probability that the NE equilibrium remains unchanged when a new uncertainty realization is encountered.

Current work is concentrated towards relaxing the uniqueness requirements underpinning the compression set quantification of Section~\ref{sec_apost_eval} for the class of aggregative games.

\addtolength{\textheight}{-19.5cm}

\bibliographystyle{IEEEtran}
\bibliography{biblioSBDR.bib}

\end{document}